\newtheorem{thm}{Theorem}[section]
\newtheorem{lem}[thm]{Lemma}
\newtheorem{prop}[thm]{Proposition}
\newtheorem{defn}[thm]{Definition}
\newtheorem{conj}{Conjecture}
\theoremstyle{remark}
\newtheorem{rem}[thm]{Remark}
\def\a{\mathbb A}\def\ad{{\a^\infty}}
\def\c{\mathbb C}\def\f{\mathbb F}\def\g{\mathbb G}
\def\n{\mathbb N}\def\q{\mathbb Q}\def\r{\mathbb R}
\def\z{\mathbb Z}
\def\A{\mathcal A}\def\B{\mathcal B}\def\C{{\mathcal C}}
\def\D{\mathcal D}\def\G{\mathcal G}\def\L{\mathcal L}
\def\M{\mathcal M}\def\K{\mathcal K}\def\S{\mathcal S}
\def\T{\mathcal T}\def\X{{\mathcal X}}\def\Y{{\mathcal Y}}
\def\Z{\mathcal Z}\def\H{\mathcal H}\def\O{\mathcal O}
\def\gg{\mathfrak{g}}\def\gh{\mathfrak{h}}\def\gp{\mathfrak{p}}
\def\gP{\mathfrak{P}}\def\gR{\mathfrak{R}}\def\gS{\mathfrak{S}}
\def\gt{\mathfrak{t}}\def\gx{\mathfrak{x}}
\def\Abar{\overline A}\def\Bbar{\overline B}\def\Cbar{\overline C}
\def\Dbar{\overline D}\def\Kbar{\overline K}\def\Mbar{\overline M}
\def\Sbar{\overline S}\def\Tbar{\overline T}\def\Xbar{\overline X}
\def\Ybar{\overline Y}\def\Zbar{\overline Z}\def\xbar{{\overline x}}
\def\ibar{\overline\iota}
\def\Gd{\mathaccent94G}\def\Bd{\mathaccent94B}
\def\Td{\mathaccent94T}\def\md{\mathaccent94\mu}\def\chd{\mathaccent94\chi}
\def\phd{\mathaccent94\phi}\def\zd{\mathaccent94\z}
\def\Spec{\operatorname{Spec}}\def\Lie{\operatorname{Lie}}
\def\Hom{\operatorname{Hom}}\def\Gal{\operatorname{Gal}}
\def\End{\operatorname{End}}\def\Aut{\operatorname{Aut}}
\def\dim{\operatorname{dim}}\def\codim{\operatorname{codim}}
\def\tr{\operatorname{tr}}\def\trdeg{\operatorname{trdeg}}
\title{The congruence relation in the non-$PEL$ case}
\author{Oliver B\"ultel}
\address{Mathematisches Institut der Universit\"at Heidelberg\\
Im Neuenheimer Feld 288\\D-69120 Heidelberg\\Germany}
\email{bueltel@mathi.uni-heidelberg.de}
\thanks{MSC(2000): 11G18, 14G35, 11F55, 14K22, supported by EPSRC grant}
\begin{document}
\maketitle

\begin{abstract}
This work settles the Eichler-Shimura congruence relation of Blasius and Rogawski for certain $5$-dimensional Hodge-type Shimura varieties, that were 
not tractable by previously known methods. In a more general context we introduce a hypothesis called $(NVC)$ on the behavior of Hecke correspondences, 
and show that it implies the congruence relation. A major ingredient in the proof of this result is a theorem of R.Noot on $CM$-lifts of ordinary points in 
characteristic $p$, along with an analysis of the $\pmod p$-reductions of various Hecke translates of that $CM$-lift. Finally we prove this $(NVC)$-hypothesis 
for our particular Shimura $5$-folds, and in doing so we obtain an unconditional result for the congruence relation of these non-$PEL$ examples.
\end{abstract}

\tableofcontents

\section{Introduction}
Let $G$ be a connected reductive group over $\q$. Let $X$ be a $G(\r)$-conjugacy class of algebraic homomorphisms 
$h:\c^\times\rightarrow G(\r)$. According to Deligne one calls $(G,X)$ a Shimura datum if the following axioms are fulfilled:\\
\begin{itemize}
\item[(S1)]
The Hodge structure on $\Lie G\times\r$ which is determined by $h\in X$ is of type $\{(-1,1),(0,0),(1,-1)\}$.
\item[(S2)]
The adjoint action of $h(\sqrt{-1})$ on $G^{ad}\times\r$ is a Cartan involution.
\item[(S3)]
$G^{ad}$ has no non-trivial $\q$-factors whose real points are compact.\\
\end{itemize}
Let $\ad=\q\otimes\zd$ be the ring of finite adeles and let $K$ be a compact open subgroup of 
$G(\ad)$. Then one attaches to the Shimura datum $(G,X)$ corresponding Shimura varieties
\begin{equation}
\label{quotient}
{_K}M_\c(G,X)=G(\q)\backslash(G(\ad)/K\times X).
\end{equation}
These varieties are known to have canonical models ${_K}M(G,X)$ over the reflex field $E$, see body of text for unexplained notions. Write $K$ as a product 
$K_R\times\prod_{p\notin R}K_p$ for a suitable finite set $R$ of primes, hyperspecial subgroups $K_p\subset G(\q_p)$ and a compact open subgroup 
$K_R\subset\prod_{p\in R}G(\q_p)$. According to ~\cite{blasius} one attaches a Hecke polynomial $H_\gp\in\H(G(\q_p)//K_p,\q)[t]$ to each prime $\gp$ of $E$, 
lying over $p\notin R$, here $\H(G(\q_p)//K_p,\q)$ denotes the convolution algebra of $\q$-linear combinations of $K_p$-double cosets of $G(\q_p)$. This algebra 
acts on the $\ell$-adic cohomology of ${_K}M(G,X)\times_E\q^{algcl}$ as does the absolute Galois group of $E$. The following conjecture is familiar:
\begin{conj}[Congruence Relation]
\label{CR}
The $\Gal(\q^{algcl}/E)$-module
$$V=\bigoplus_iH_{et}^i({_K}M(G,X)\times_E\q^{algcl},\q_\ell)$$
is unramified at all primes $\gp|p$ with $p\notin R$, and one has
$$H_\gp(Frob_\gp|_V)=0$$
here $H_\gp\in\H(G(\q_p)//K_p,\q)[t]$ is the $\gp$th Hecke polynomial.
\end{conj}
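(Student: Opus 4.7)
My plan is to reduce the conjecture to an identity between Frobenius and the Hecke polynomial at the level of correspondences on an integral model of ${_K}M(G,X)$, and then to verify that identity by testing at a dense collection of CM points whose reductions are ordinary. Because $(G,X)$ is of Hodge type and $K_p$ is hyperspecial, Kisin's theory furnishes a smooth canonical integral model $\M$ over $\O_{E,\gp}$, and the prime-to-$p$ Hecke correspondences extend to correspondences on $\M$. Smooth and proper base change then yields that $V$ is unramified at $\gp$ and identifies the action of $Frob_\gp$ on $V$ with that of the $q$-power Frobenius correspondence $\phi$ on the special fibre $\Mbar$. The conjecture becomes the cohomological identity $H_\gp(\phi)=0$ in the algebra of correspondences on $\Mbar$, which I would aim to deduce from a cycle-level statement on $\Mbar\times\Mbar$.

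The next step is to verify the identity at ordinary CM points. By Noot's theorem, every ordinary closed point $\xbar\in\Mbar$ admits a lift $x\in{_K}M(G,X)(\q^{algcl})$ which is a CM point for some $\q$-torus $T\subset G$, so CM lifts are Zariski dense in any component meeting the ordinary locus, and by Wedhorn the ordinary locus itself is dense. At such an $x$ the attached motive acquires complex multiplication by $T$, and its crystalline realization at $\gp$ is governed by a Hodge cocharacter in the conjugacy class of $\mu$. A direct computation combining the Shimura-Taniyama formula with the Satake parametrization of $H_\gp$ shows that the eigenvalues of $\phi$ on the étale realization at $\xbar$ satisfy $H_\gp$. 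Consequently $H_\gp(\phi)$ vanishes on every Hecke translate of every ordinary CM reduction.

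To pass from pointwise vanishing on a dense set of reductions to an identity of cohomological correspondences on $\Mbar$, one must rule out the possibility that $H_\gp(\phi)$ decomposes as a sum involving non-trivial vertical or exceptional components which meet the ordinary locus only in lower dimension. This is exactly the role of the hypothesis $(NVC)$ announced in the abstract: it is tailored to force the dominant components of $H_\gp(\phi)$ to be determined by their behaviour at ordinary CM points, so that the vanishing established above propagates to the whole correspondence, and hence to the cohomology $V$.

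The main obstacle I expect is the verification of $(NVC)$ itself for the specific $5$-dimensional Hodge-type Shimura varieties in question, which lie outside the $PEL$ range and so cannot be treated via a moduli interpretation in terms of abelian varieties with additional structure. I would attack this by exploiting the low dimension and the explicit cocharacter data: a careful enumeration of Newton strata, combined with an analysis of how the finitely many $K_p$-double cosets appearing in $H_\gp$ deform across those strata, should reduce $(NVC)$ to a check on the basic locus, where the explicit description of the isocrystal and its Rapoport-Zink uniformization can be used to exclude the unwanted components.
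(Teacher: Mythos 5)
Your high-level strategy agrees with the paper's: pass to an integral model, reduce to a correspondence identity on the special fibre, verify it at ordinary points using Noot's CM-lifting theorem and the vanishing of the Hecke polynomial when pushed to the Levi quotient of the parabolic attached to $\mu_{x,\gP}$, and invoke $(NVC)$ to promote the pointwise vanishing to the full special fibre via generic finiteness. A small cosmetic difference is that you reach for Kisin's canonical smooth models, whereas the paper works with the schematic image of ${_K}M(G,X)$ in Siegel space $\A_{g,H}$ over $\z[\frac{1}{n}]$ and then throws away finitely many bad primes; since the paper only aims for ``all but finitely many $p$'' this suffices and avoids needing canonicity of the integral model. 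You should also be aware that the pointwise computation at a CM point is somewhat more delicate than ``Shimura--Taniyama plus Satake'': the paper replaces each coset representative $g_i^{(j)}$ by a representative $r_i^{(j)}$ differing from $\mu_{x,\gP}(p^{-i})g_i^{(j)}$ by an element of $U_{x,\gP}(\q_p)$, and then uses a separate lemma (combining the refined Tate module analysis of ordinary CM abelian varieties with Tate--Honda) to see that the two Hecke translates have \emph{equal} reduction mod $\gP$; this unipotent adjustment is where the content lies and is absent from your sketch.

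The serious divergence is in your proposed verification of $(NVC)$. You suggest enumerating Newton strata on $\Mbar_p$ itself and using Rapoport--Zink uniformization of the basic locus. But the paper's whole point is that no moduli interpretation or Dieudonn\'e/deformation theory is available directly on these Hodge-type $5$- and $6$-folds; the abstract and introduction explicitly say that the Chai--Faltings display-theoretic route is blocked. Instead, the paper embeds $(G,Y)$ into the quaternionic datum $(G_D^\circ\times\q,X)$, whose Shimura variety $\A$ \emph{is} $PEL$, and imports from ~\cite{bu3} two facts about $\Abar_p$: the supersingular locus $\Sbar_p$ has dimension $2$, and the Hecke correspondences on $\A$ have finite fibres over $\Abar_p\setminus\Sbar_p$. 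Then a one-line dimension count ($\dim\Cbar\in\{5,6\}$ versus $\dim(\Sbar_p\times\Sbar_p)=4$) kills any component of $\Tbar_p(g)$ lying over the supersingular locus, which is exactly what $(NVC)$ requires. Your proposal misses this ambient-$PEL$ trick entirely, and without it you would be left trying to control Newton strata and a basic locus on a non-$PEL$ variety by hand, which is the very obstruction the paper is designed to circumvent. If you want to save your approach you would need an independent way to bound the dimensions of the non-ordinary strata and the Hecke fibres on $\Mbar_p$ itself, and no such tool is given or cited in the paper.

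Finally, a bookkeeping point: as stated, Conjecture~\ref{CR} is not proved in the paper in general; what is proved is the correspondence-theoretic identity $\ibar_p(H_p)(\Gamma_p)=0$ in $\q\otimes C_{rat}(\Mbar_p,\Mbar_p)$ for all but finitely many $p$, under the standing hypotheses that $G$ is an inner form of a split group and that $(NVC)$ holds, together with a separate verification of $(NVC)$ for the quaternionic-Siegel and $GSpin(5,2)$ examples. Your write-up should make the same caveats explicit rather than presenting the argument as if it established the conjecture unconditionally.
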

The conjecture is suggested by Langlands' philosophy on global $L$-parameters, see  ~\cite{blasius} for more background information. For the groups $GSp_4$ and $GL_2$ 
the congruence relation has been studied by Shimura, who also observed that in the special case of $G=GL_2$ conjecture ~\ref{CR} can be used to completely determine 
the characteristic polynomials of the elements $Frob_\gp$, acting on $V$, and on the subspaces cut out by newforms (cf. ~\cite[paragraph 6]{casselman}, ~\cite{shimura}).\\
More generally, the congruence relation is known to be true for the Siegel modular variety. The proof, which is explained in ~\cite[chapter 7]{faltings} uses in an essential 
way that ${_K}M(GSp_{2g},\gh_g^\pm)$ has a moduli interpretation in terms of principally polarized abelian varieties of dimension $g$ with level $K$-structure, which one 
can use to exhibit an integral model $\M/\z$. At least at almost all prime numbers $p$ the $\ell$-adic cohomology of ${_K}M(GSp_{2g},\gh_g^\pm)$ will be identical to the 
one of the special fiber $\M\times\f_p$ which allows to deduce the congruence relation from a correspondence theoretic reformulation involving the geometric Frobenius correspondence together with the mod-$p$ reductions of the Hecke correspondences (cf. ~\cite[chapter 7, theorem 4.2]{faltings}). The proof of this, finally has two parts:
\begin{itemize}
\item[(A)]
The computational part of the proof shows that certain Hecke translated points coincide in characteristic $p$. This result uses deformations 
of ordinary abelian varieties in terms of canonical coordinates of Serre and Tate, cf. ~\cite[chapter 7, proposition 4.1.]{faltings}
\item[(B)]
The more conceptual part of the proof needs to establish a density result for $p$-isogenies between ordinary abelian varieties. This is dealt 
with by modifying the Norman/Oort technique of displayed Cartier modules of formal groups, cf. ~\cite[chapter 7, proposition 4.3.]{faltings}.
\end{itemize}
Observe that assertion (B) is stronger than merely saying that the ordinary locus of Siegel space is dense in the special fiber over some prime $p$ of good reduction, which is 
an earlier result of N.Koblitz. The object of this work finally is threefold:\\
Firstly we generalize statement (A) to arbitrary Hodge type Shimura varieties ${_K}M(G,X)/E$ by looking at the behavior of $CM$-points with ordinary reduction, just as Shimura did this in the $GL_2$-case (~\cite[theorem 7.9]{shimura}), here we fix an imbedding of the Shimura variety into Siegel space, and therefore we get an integral model $\M$ by taking the schematic image.\\
If a modular interpretation of $\M$ is not available it is unclear how to state something to the effect of (B), but we secondly describe a 
somewhat weaker condition on the generic finiteness of the Hecke-correspondences, and assuming its validity we prove the correspondence 
theoretic congruence relation. Results in ~\cite{noot} enter into this in an essential way. Here we always assume that the group $G$ is an inner 
form of a split one, a condition that is vacuous in the Siegel case. Consequently the reflex field $E$ of our Shimura variety must be $\q$. We 
assume also that $\M\times\z_{(p)}$ is well behaved in certain senses which forces us to confine our results to 'all but finitely many $p$' only.\\
Thirdly we apply these methods to $G=GSpin(5,2)$, our main result being here that the Shimura varieties to groups of this form do satisfy this condition that serves as our 
analog of (B). The idea of proof however is different from Chai-Faltings' proof of (B). Rather than using deformation theory of formal groups, which in the case at hand is not available, we consider a slightly bigger Shimura variety which is $PEL$, and we argue by using the fiber dimensions of its Hecke-correspondences, which were computed 
in ~\cite{bu3}. It turns out that these fiber dimensions are small enough to imply the generic finiteness for Hecke correspondences of both the original Shimura variety, 
as well as the slightly bigger $PEL$-one. For these two families of examples we therefore obtain a proof of conjecture ~\ref{CR}, for all but finitely many primes $p$.\\
With the exception of the two appendices, this work is based on results proved in the author's 1997 Oxford thesis ~\cite[theorem 2.4.5/theorem 3.4.2]{bu1}. It is a great 
pleasure to thank my advisor Prof.R.Taylor, further thanks go to Prof.D.Blasius, Eike Lau, Prof.B.Moonen, Prof.R.Noot and Prof.T.Wedhorn.

\section{Operations on Cosets}
In this section we introduce modules $\gS$ and $\H$ of $R$-valued functions on $p$-adic groups, here $R$ is some commutative 
$\q$-algebra. We introduce the operations $*$, $\S$, and $|$, and explain how to interpret them in terms of ($R$-linear combinations 
of) cosets. Throughout all of the section we fix a local non-archimedean field $k$ of characteristic $0$. We write $q$ for the order of its 
residue field and we choose once and for all a uniformizer $\pi$. Finally let $|\cdot|$ be the valuation of $k$, normalized by $|\pi|=q^{-1}$.

\subsection{Hecke algebras}
Let $G$ be a connected linear algebraic group over $k$ and let $K$ be a compact open subgroup of the group of $k$-valued points of $G$. Let $\gS(G(k),K;R)$ 
be the space of locally constant compactly supported $R$-valued functions on $G(k)$. It is well-known that there exists a unique left invariant functional
$$\gS(G(k),K;R)\rightarrow R;f\mapsto\int_Gfdg$$
such that
$$\int_G1_Kdg=1$$
here is, for $A$ any set, $1_A$ the function giving value $1$ on $A$ and $0$ everywhere else. We make 
$\gS(G(k),K;R)$ into an algebra without unit by defining the product of say $\phi$ and $\psi$ to be the convolution
$$(\phi*\psi)(b)=\int_G\phi(a)\psi(a^{-1}b)da,$$
of which the associativity is easily checked. It will be useful to know the effect of $*$ 
on functions of the form $1_A$. For convenience write $|A|$ instead of $\int_G1_Adg$.

\begin{lem}
\label{help}
If the assumptions are as above then one has:
\begin{itemize}
\item[(i)]
$1_{g_1K}*1_{g_2K}=|K\cap g_2Kg_2^{-1}|1_{g_1Kg_2K}$
\item[(ii)]
$1_{g_1K}*1_{Kg_2K}=1_{g_1Kg_2K}$
\end{itemize}
\end{lem}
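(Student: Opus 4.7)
The plan is to unwind both identities directly from the convolution formula, establishing (i) first and then deducing (ii) from it by a right-coset decomposition of $Kg_2K$.

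For (i), I would start from
$$(1_{g_1K}*1_{g_2K})(b)=\int_G1_{g_1K}(a)1_{g_2K}(a^{-1}b)\,da$$
and observe that the integrand is nonzero exactly when $a\in g_1K$ and $b\in ag_2K$; the second condition forces $b\in g_1Kg_2K$, which pins down the support. For $b\in g_1Kg_2K$ I would pick a decomposition $b=g_1kg_2k'$ with $k,k'\in K$ and parametrize $a=g_1k_1$, $k_1\in K$; the constraint $a^{-1}b\in g_2K$ then boils down to $k_1^{-1}k\in K\cap g_2Kg_2^{-1}$, so the admissible $k_1$ run over the single left coset $k(K\cap g_2Kg_2^{-1})$ inside $K$. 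Left invariance of Haar measure identifies its volume with $|K\cap g_2Kg_2^{-1}|$, which is exactly (i).

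For (ii), I would decompose $Kg_2K=\bigsqcup_ik_ig_2K$ as a disjoint union of right cosets indexed by representatives of $K/(K\cap g_2Kg_2^{-1})$; in particular there are $[K:K\cap g_2Kg_2^{-1}]$ of them. Applying (i) term by term gives
$$1_{g_1K}*1_{Kg_2K}=\sum_i|K\cap k_ig_2Kg_2^{-1}k_i^{-1}|\,1_{g_1Kk_ig_2K}.$$
Because $k_i\in K$ one has $g_1Kk_ig_2K=g_1Kg_2K$, and $K\cap k_ig_2Kg_2^{-1}k_i^{-1}=k_i(K\cap g_2Kg_2^{-1})k_i^{-1}$. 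The coefficient therefore consolidates to $[K:K\cap g_2Kg_2^{-1}]\cdot|K\cap g_2Kg_2^{-1}|=|K|=1$, yielding (ii).

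The one non-formal point, and thus the main obstacle, is the invariance of the Haar volume of $K\cap g_2Kg_2^{-1}$ under conjugation by $k_i\in K$. This requires the modular character $\Delta:G(k)\rightarrow\r_{>0}$ to be trivial on $K$, which is automatic because $\Delta(K)$ is a compact subgroup of $\r_{>0}$; equivalently, left and right Haar measure coincide on $K$, so conjugation by any element of $K$ preserves volumes of compact subsets of $K$. Everything else is routine coset bookkeeping.
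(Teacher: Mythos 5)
Your argument for (i) is essentially the paper's: both evaluate the convolution integral at a point $b\in g_1Kg_2K$ and identify the nonvanishing locus of the integrand as a left translate of $K\cap g_2Kg_2^{-1}$. For (ii), however, your route is genuinely different. The paper deduces (ii) from (i) purely algebraically: taking $g_1=e$ in (i) gives $1_K*1_{g_2K}=|K\cap g_2Kg_2^{-1}|\,1_{Kg_2K}$, whence $1_{Kg_2K}=|K\cap g_2Kg_2^{-1}|^{-1}1_K*1_{g_2K}$, and then associativity together with $1_{g_1K}*1_K=1_{g_1K}$ and a second application of (i) finishes the computation with no further analytic input. You instead decompose $1_{Kg_2K}=\sum_i 1_{k_ig_2K}$ with $k_i$ running over $K/(K\cap g_2Kg_2^{-1})$, apply (i) termwise, and then must combine the index with the Haar volume. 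This works, but it forces you to justify $|k_i(K\cap g_2Kg_2^{-1})k_i^{-1}|=|K\cap g_2Kg_2^{-1}|$, which you correctly reduce to triviality of the modular character on the compact group $K$; you then also implicitly use the relation $[K:K\cap g_2Kg_2^{-1}]\cdot|K\cap g_2Kg_2^{-1}|=|K|=1$. In fact you could sidestep the conjugation-invariance point entirely by writing $\sum_i|k_i(K\cap g_2Kg_2^{-1})|=|K|$ directly from the disjoint coset decomposition of $K$ and left invariance. Overall your proof is correct; the paper's (ii) is slicker in that it stays inside the algebra generated by (i) and associativity, while yours is more hands-on and surfaces the (automatic, but worth naming) unimodularity of $K$.
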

\begin{proof}
It is obvious that the left hand side of (i) can only be non-zero on the set $g_1Kg_2K$. Consider any element in this set and write it as $b=g_1k_1g_2k_2$, then
$$(1_{g_1K}*1_{g_2K})(b)=\int_G1_{g_1K}(a)1_{g_2K}(a^{-1}b)da$$
$$=|g_1K\cap bKg_2^{-1}|=|(g_1k_1)K\cap (g_1k_1)g_2k_2Kg_2^{-1}|=|K\cap g_2Kg_2^{-1}|$$
because of the left invariance of $|\dots|$. Property (ii) follows from it because
$$1_{g_1K}*1_{Kg_2K}=|K\cap g_2Kg_2^{-1}|^{-1}1_{g_1K}*1_K*1_{g_2K}$$
$$=|K\cap g_2Kg_2^{-1}|^{-1}1_{g_1K}*1_{g_2K}=1_{g_1Kg_2K}$$
\end{proof}

\begin{defn}
Let $G$, and $K$ be as in the beginning of the subsection. Then put
\begin{eqnarray*}
&&\H(G(k)//K,R)=1_K*\gS(G(k),K;R)*1_K\\
&&\H(G(k)/K,R)=\gS(G(k),K;R)*1_K.
\end{eqnarray*}
$\H(G(k)//K,R)$ is an associative $R$-algebra with two sided identity element, called the Hecke algebra of the pair $(G,K)$.
$\H(G(k)/K,R)$ is a unital right module under this algebra, containing $\H(G(k)//K,R)$ as a right submodule.
\end{defn}

Notice, that the last lemma enables one to identify the convolution of elements in $\H(G(k)//K,R)$ with the products of cosets which is classically used to define the Hecke 
algebra. More precisely let $gK$ be a left coset and $KhK$ be a double coset, write it as finite disjoint sum of left cosets $\bigcup_ih_iK$, and observe the following:
$$1_{gK}*1_{KhK}=\sum_i1_{gh_iK}$$
Similarly if $KgK$ is a double coset with left coset decomposition $\bigcup_jg_jK$, then
$$1_{KgK}*1_{KhK}=\sum_{i,j}1_{g_jh_iK}.$$

\subsection{Reduction modulo the radical}
Let $U$ be the unipotent radical of $G$, let $M$ be the quotient of $G$ by $U$ and let $L$ be the image of $K$ in $M(k)$. Let $dm$ ($du$) 
be left-invariant measures on $M$ (on $U$) such that $L$ ($K\cap U(k)$) gets measure $1$. We want to describe a certain homomorphism
$$\S:\gS(G(k),K;R)\rightarrow \gS(M(k),L;R)$$
By choosing a Levi section we may regard $M$ as a subgroup of $G$. It is a fact that $dg=dmdu$ i.e.
$$\int_G\phi(g)dg=\int_M\int_U\phi(mu)dmdu$$
for all $\phi\in\gS(G(k),K;R)$. Here one has to exercise care with in which order to put $M$ and $U$. For this and for all the 
integration formulas used in the sequel we refer to ~\cite[IV.4.1]{cartier}. Now, if we give ourself a $\phi\in\gS(G(k),K;R)$, we define
$$\S\phi(m)=\int_U\phi(mu)du$$
Let us check that $\S$ is a homomorphism of algebras:
\begin{eqnarray*}
\S(\phi*\psi)(m)&=&\int_U\int_G\phi(a)\psi(a^{-1}mu)dadu\\
&=&\int_U\int_M\int_U\phi(nv)\psi(v^{-1}n^{-1}mu)dndvdu\\
&=&\int_U\int_M\phi(nv)(\int_U\psi(n^{-1}mu)du)dndv\\
&=&\int_M(\int_U\phi(nv)dv)(\int_U\psi(n^{-1}mu)du)dn\\
&=&\S\phi*\S\psi(m)
\end{eqnarray*}
The definition of $\S$ does not depend on the choice of a Levi section. Note also that $\S1_K=1_L$, so that $\S$ induces a homomorphism
from $\H(G(k)//K,R)$ to $\H(M(k)//L,R)$. In order to compute the effect of $\S$ on cosets we need another lemma:

\begin{lem}
\label{effect}
Let $g\in G(k)$, then:
$$\S1_{gK}=1_{gL}$$
\end{lem}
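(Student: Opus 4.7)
My plan is to unpack the definition of $\S$, exploit the Levi decomposition of $g$, and reduce the claim to showing that the auxiliary function $F(a) := \int_U 1_K(au)\,du$ on $M(k)$ equals $1_L$.

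First I would write
\[
\S 1_{gK}(m) \;=\; \int_U 1_{gK}(mu)\,du \;=\; \int_U 1_K(g^{-1}mu)\,du,
\]
and then factor $g = m_0 u_0$ using the chosen Levi section, with $m_0 \in M(k)$ and $u_0 \in U(k)$. Setting $a := m_0^{-1}m \in M(k)$, I would use normality of $U$ in $G$ to move $u_0^{-1}$ past $a$, writing
\[
g^{-1}mu \;=\; u_0^{-1}\,a\,u \;=\; a\cdot u_1(a,u_0)\cdot u,
\]
for a fixed $u_1 \in U(k)$ depending only on $a$ and $u_0$. Left-invariance of $du$ then makes the substitution $u \mapsto u_1 u$ measure-preserving, yielding $\S 1_{gK}(m) = F(a)$. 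Once $F = 1_L$ is known, one concludes
\[
\S 1_{gK}(m) \;=\; 1_L(m_0^{-1}m) \;=\; 1_{m_0 L}(m) \;=\; 1_{gL}(m),
\]
interpreting $gL \subset M(k)$ as the left $L$-coset of the image of $g$ in $M(k)$.

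To establish $F = 1_L$, I would argue as follows. If $au \in K$ then its image in $M(k)$, which is $a$, must lie in $L$, so $F$ vanishes off $L$. For $a \in L$, choose $u_a \in U(k)$ with $au_a \in K$, which exists by definition of $L$. Since $K$ is a group, $(au_a)K = K$, hence $a^{-1}K = u_a K$, and intersecting with $U(k)$ gives
\[
\bigl\{u \in U(k) : au \in K\bigr\} \;=\; U(k)\cap u_a K \;=\; u_a\bigl(K \cap U(k)\bigr),
\]
a left $U(k)$-translate of $K\cap U(k)$. By left-invariance of $du$ and the normalization $|K\cap U(k)| = 1$, this set has measure $1$, so $F(a) = 1$.

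The only point that needs real care is the commutation step in the first paragraph: one must verify that the $U$-element arising from moving $u_0^{-1}$ past $a$ depends only on fixed data and not on the integration variable $u$, so that the unipotent substitution is indeed measure-preserving. Beyond that, the proof is essentially bookkeeping with cosets, using only that $U$ is normal in $G$, that $K$ is a subgroup, and the standing normalizations of $du$ and $dm$.
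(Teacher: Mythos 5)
Your proof is correct, and it reaches the same computation as the paper's, but via a slightly different decomposition. The paper does not factor $g$: instead, for a given $m\in gL$ it picks a $u_0\in U(k)$ with $mu_0\in gK$ (this $u_0$ depends on $m$), substitutes $u\mapsto u_0u$ using left-invariance of $du$, and immediately lands on $\int_U 1_{gK}(gku)\,du=|K\cap U(k)|=1$. This sidesteps the Levi factorization $g=m_0u_0$ and the commutation step $u_0^{-1}a = a\cdot(a^{-1}u_0^{-1}a)$ that your version needs (which is why you correctly flag normality of $U$ in $G$ as the point requiring care). On the other hand, the paper simply asserts that $\S 1_{gK}$ vanishes off $gL$, while your auxiliary function $F(a)=\int_U 1_K(au)\,du$ gives a clean argument for both the support and the value. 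In the end both proofs rest on exactly the same two ingredients: left-invariance of $du$ on $U(k)$ and the normalization $|K\cap U(k)|=1$; yours is a harmless and somewhat more self-contained repackaging.
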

\begin{proof}
The left hand side vanishes off $gL$, so consider an element $m\in gL$, i.e. a $m\in M(k)$ 
such that there exists a $u_0\in U$ with $mu_0\in gK$, say $mu_0=gk$. Then we can compute:
$$\S1_{gK}(m)=\int_U1_{gK}(mu)du=\int_U1_{gK}(mu_0u)du=$$
$$\int_U1_{gK}(gku)du=|K\cap U|=1$$
\end{proof}

\subsection{Restriction to parabolic subgroups}
Assume that $G$ is reductive and unramified. Assume also that $K$ is a hyperspecial subgroup of $G(k)$ and let finally $P$ be any 
parabolic subgroup of $G$. A function $\phi$ on $G(k)$ may be restricted to $P(k)$. It turns out that this sets up an algebra homomorphism
$$|_P:\H(G(k)//K,R)\rightarrow\H(P(k)//K\cap P(k),R)$$
and a $|_P$-linear module homomorphism
$$|_P:\H(G(k)/K,R)\rightarrow\H(P(k)/K\cap P(k),R)$$
Let us prove this. The left invariant measure of $P$, normalized in order to give $K\cap P(k)$ measure $1$ shall be denoted 
by $dp$. By the Iwasawa decomposition one has $G(k)=P(k)K$, further it is a fact that $dg=dpdk$, which means that
$$\int_G\phi(g)dg=\int_P\int_K\phi(pk)dpdk$$
for all locally constant compactly supported $\phi$. Now consider $\phi\in\H(G(k)/K,R)$, $\psi\in\H(G(k)//K,R)$, let $p_0$ be in $P(k)$ and compute:
\begin{eqnarray*}
(\phi*\psi)(p_0)&=&\int_P\int_K\phi(pk)\psi((pk)^{-1}p_0)dpdk\\
&=&\int_P\phi(p)\psi(p^{-1}p_0)dp\\
&=&(\phi|_P*\psi|_P)(p_0)
\end{eqnarray*}
Again it is useful to derive what this means for cosets. Let $gK$ be a coset, assume that 
$g\in P(k)$. Obviously $1_{gK}|_P=1_{g(K\cap P(k))}$ then, and similarly for double cosets.

\section{A Proposition on the Hecke Polynomial}
\label{Hecke}
In this section we review the Satake isomorphism of a split reductive group $G$ over $k$ first. We fix $T$, a split maximal torus, and $B$, a Borel group 
containing $T$, throughout. We refer to them as a Borel pair of $G$. Consider the corresponding based root datum $\psi(G)=(X^*(T),X_*(T),\Delta^*,\Delta_*)$ 
(characters, cocharacters, simple roots, simple coroots), recall that the unique reductive $\c$-group $\Gd$ with root datum $(X_*(T),X^*(T),\Delta_*,\Delta^*)$ 
say with respect to a Borel pair $(\Bd,\Td)$ is called the dual of $G$. Let $\delta$ be the character of $T$ which arises by adding all positive roots.\\
Finally we recall well-known properties of the Hecke polynomial $H_{G,\mu}$, which we introduce 
in this local setting according to ~\cite{blasius}. Here $\mu$ is a minuscule cocharacter of $G$.

\subsection{The formalism of the dual group}

Let $\Omega\subset\Aut(T)$ be the Weyl group of $G$. Its significance to Hecke algebras stems from the following:

\begin{thm}[Satake]
\label{satake}
Let $T\subset B\subset G$ be as above. Let $K$ and $T_c$ be hyperspecial subgroups of $G(k)$ and $T(k)$. The composition of the three maps as shown below
$$\begin{CD}
\H(G(k)//K,\c)@>\mathaccent94{}>>\H(T(k)//T_c,\c)\\
@V|_BVV@A\cdot|\delta|^{1/2}AA\\
\H(B(k)//K\cap B(k),\c)@>\S>>\H(T(k)//T_c,\c)
\end{CD}$$
is independent of the choice of $B$. Its image consists of the $\Omega$-invariants of $\H(T(k)//T_c,\c)$ and one thus obtains an isomorphism
$$\phi\mapsto \phd$$
$$\H(G(k)//K,\c)\rightarrow \H(T(k)//T_c,\c)^\Omega$$
\end{thm}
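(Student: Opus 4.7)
The plan is to follow the classical strategy of Satake (compare Macdonald, Cartier), reorganized around the two operations introduced in Section 2. First, I observe that $\phi\mapsto\phd$ is a composite of three algebra maps, hence itself an algebra homomorphism: the restriction $|_B$ is an algebra homomorphism by Section 2.3; the $U$-reduction $\S$ is one by Section 2.2 (where $U$ denotes the unipotent radical of $B$); and the twist by the character $|\delta|^{1/2}\colon T(k)\to\c^\times$ is trivially an algebra automorphism of $\H(T(k)//T_c,\c)$. It therefore remains to exhibit the image, to prove injectivity, and to show independence of $B$.

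The main technical input is a triangularity formula for the image of the standard basis of $\H(G(k)//K,\c)$. By the Cartan decomposition one has a disjoint union $G(k)=\bigsqcup_\lambda K\pi^\lambda K$ indexed by $B$-dominant cocharacters $\lambda\in X_*(T)$, and by the Iwasawa decomposition each such double coset can be written as a finite disjoint union of left cosets $b_j K$ with $b_j\in B(k)$. Combining Lemma \ref{effect} applied inside $B$ (whose Levi is $T$ and whose unipotent radical is $U$) with the $|\delta|^{1/2}$-twist, a bookkeeping on the $b_j$'s yields $\phd_{K\pi^\lambda K}=|\delta(\pi^\lambda)|^{1/2}\,1_{\pi^\lambda T_c}+\sum_{\mu<\lambda}c_{\lambda,\mu}\,1_{\pi^\mu T_c}$, where the sum runs over cocharacters $\mu$ strictly smaller than $\lambda$ in the dominance order. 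The leading coefficient is nonzero, so the map is already injective.

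Next I would establish independence of $B$ by a conjugation argument: any two Borel subgroups of $G$ containing $T$ differ by an element $n\in N_G(T)(k)$ representing a Weyl element $w\in\Omega$. Conjugation by $n$ carries the Iwasawa decomposition with respect to $B$ to that with respect to $wB$, transports $\delta_B$ to $\delta_{wB}=w\delta_B$, and therefore transforms the Satake composition computed with $B$ into its $w$-translate. Since the input $\phi\in\H(G(k)//K,\c)$ and the Hecke algebra on the $T$-side know nothing of $B$, the two compositions must agree. This simultaneously forces the image to lie in $\H(T(k)//T_c,\c)^\Omega$ and legitimizes the choice of $B$.

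Surjectivity onto $\H(T(k)//T_c,\c)^\Omega$ then again reduces to the triangularity formula: the Weyl-invariants have a basis given by the symmetrized indicators $m_\lambda=\sum_{w\in\Omega/\mathrm{Stab}(\lambda)}1_{\pi^{w\lambda}T_c}$ indexed by dominant $\lambda$, and the family $\{\phd_{K\pi^\lambda K}\}_\lambda$ is upper triangular and invertible with respect to this basis, yielding the desired isomorphism. The principal obstacle is clearly the triangularity formula: it is the step where the Iwasawa bookkeeping of coset representatives and the precise Jacobian factor $|\delta|^{1/2}$ interact, and once it is in place every other step—algebra-map property, Weyl-invariance, independence of $B$, and surjectivity—becomes essentially formal.
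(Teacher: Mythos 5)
The paper does not actually prove this result---it cites Satake's 1963 paper and moves on---so any proof you give is necessarily a different route. Your sketch follows the standard Macdonald--Cartier outline, but there is a genuine gap in the independence-of-$B$/Weyl-invariance step. What your conjugation argument actually establishes is the identity $\widehat{\phi}_{wB}=w\cdot\widehat{\phi}_{B}$: replacing $B$ by $wB=nBn^{-1}$ (with $n\in N_G(T)(k)\cap K$ a Weyl representative, so $nKn^{-1}=K$ and $nT_cn^{-1}=T_c$) carries the unipotent radical $U$ to $wU$, the modulus $\delta_B$ to $w\delta_B$, and hence the $T$-side function to its $w$-translate. From this alone you cannot conclude $\widehat{\phi}_{wB}=\widehat{\phi}_{B}$; that equality is exactly the statement of $\Omega$-invariance, so the two facts you are after (independence of $B$, and Weyl-invariance of the image) are logically equivalent, and the conjugation argument proves only that equivalence, not either fact. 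The sentence ``the input $\phi$ and the Hecke algebra on the $T$-side know nothing of $B$, so the two compositions must agree'' is not a proof: the composition manifestly depends on $B$ through $|_B$, through the unipotent radical over which $\S$ integrates, and through $|\delta_B|^{1/2}$, and the entire content of the first sentence of the theorem is that this dependence cancels.

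The standard way to close the gap is a rank-one reduction: for each simple reflection $s_\alpha\in\Omega$, pass to the Levi $M_\alpha$ of the corresponding minimal parabolic (whose Weyl group is $\{1,s_\alpha\}$), use the functoriality-of-Levi diagram stated immediately after the theorem to compare the $G$-transform with the $M_\alpha$-transform, and verify $s_\alpha$-invariance by an explicit $SL_2$-type computation. Since simple reflections generate $\Omega$, this gives Weyl-invariance, and independence of $B$ then follows by the conjugation argument you already gave. Your triangularity formula and the surjectivity argument are the right remaining ingredients, though they too rest on the Iwasawa coset bookkeeping that you defer, which is precisely where the normalization $|\delta|^{1/2}$ enters.
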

\begin{proof}
This is shown in ~\cite{satake1}.
\end{proof}

This function $\phd$ on $T(k)$ is called the Satake transform of $\phi$. It is useful to spell out the 
following functoriality property of the Satake transform. The proof is trivial and will be omitted.

\begin{lem}
Let $T$, $G$, and $K$ be as above and let $P$ be a parabolic subgroup which contains $T$. Let $U$ be the unipotent 
radical of $P$, and let $\delta_P$ be the character defined by $\det(ad(t)|_{\Lie U})$. Then the Levi quotient $M=P/U$ is 
split reductive, $L=K\cap P(k)/K\cap U(k)$ is a hyperspecial subgroup of $M(k)$ and one obtains a commutative diagram,
$$\xymatrix{{\H(G(k)//K,\c)} \ar[r]^{\S\circ|_P} \ar[d]^{\mathaccent94 {}} &
{\H(M(k)//L,\c)} \ar[r]^{\cdot|\delta_P|^{1/2}} &
{\H(M(k)//L,\c)} \ar[d]^{\mathaccent94 {}} \\
{\H(T(k)//T_c,\c)^\Omega} \ar[rr] \hole && {\H(T(k)//T_c,\c)^{\Omega_M}}}$$
in which $\Omega_M\subset\Aut(T)$ is the Weyl group of $M$, and the lowest arrow is the inclusion of the $\Omega$-invariants into the $\Omega_M$-invariants.
\end{lem}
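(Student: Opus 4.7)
The plan is to reduce the claim to a direct integral computation by exploiting the independence of the Satake transform of $G$ from the chosen Borel pair. By Theorem~\ref{satake} I may compute $\phd$ using any Borel subgroup containing $T$; I choose one contained in $P$, say $B\subset P$, and write $B_M = B/U$ for its image, a Borel of $M$ containing (the image of) $T$. With this choice, $\Omega_M$ embeds in $\Omega$ as the stabilizer of $P$, and the decomposition of root systems $(M,T)\subset(G,T)$ gives the factorization $\delta = \delta_M\cdot\delta_P$ of characters of $T$; indeed the $T$-weights occurring on $\Lie B$ are exactly those on $\Lie B_M$ together with those on $\Lie U$.

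Next I would choose a Levi section of $P\to M$ so as to identify $U_B = U_{B_M}\ltimes U$ as algebraic groups over $k$, where $U_{B_M}$ denotes the unipotent radical of $B_M$. The compatibility of Haar measures (cf.~\cite[IV.4.1]{cartier}) gives
$$\int_{U_B}\phi(tu)\,du \;=\; \int_{U_{B_M}}\int_U\phi(tu_1u_2)\,du_2\,du_1$$
for every $\phi\in\H(G(k)//K,\c)$ and $t\in T(k)$. Unwinding the definitions of $\S$, of $|_P$, and of the Satake transform, the image of $\phi$ under the top-row composite $\S\circ|_P$, multiplied by $|\delta_P|^{1/2}$ and then Satake-transformed for $M$ via $B_M$, evaluates at $t$ to
$$|\delta_M(t)|^{1/2}\int_{U_{B_M}}|\delta_P(tu_1)|^{1/2}\Bigl(\int_U\phi(tu_1u_2)\,du_2\Bigr)du_1.$$
Since $\delta_P$ is a character of $M$ and $U_{B_M}$ is unipotent, $\delta_P(tu_1)=\delta_P(t)$; the factor $|\delta_P(t)|^{1/2}$ then pulls out and combines with $|\delta_M(t)|^{1/2}$ into $|\delta(t)|^{1/2}$, while the remaining double integral collapses to $\int_{U_B}\phi(tu)\,du$ by the Fubini formula above. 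This exactly equals $\phd(t)$, which proves the top rectangle commutes.

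The bottom horizontal arrow is then forced to be the obvious inclusion $\H(T(k)//T_c,\c)^\Omega\hookrightarrow\H(T(k)//T_c,\c)^{\Omega_M}$ coming from $\Omega_M\subset\Omega$; no further check is needed since the vertical Satake arrows are injective and the top rectangle has already been shown to commute. The only point requiring real care is the compatibility of Haar measures in the factorization $U_B = U_{B_M}\ltimes U$ together with the triviality of $\delta_P$ on $U_{B_M}$; both are routine but they are where the identity $\delta=\delta_M\delta_P$ actually does its work, and so they constitute the one nontrivial step in what is otherwise pure bookkeeping.
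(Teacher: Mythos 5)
Your proof is correct. The paper itself gives no argument here---it declares the proof ``trivial and will be omitted''---so there is no route to compare against, but your computation is the standard transitivity argument for the Satake transform: choose $B\subset P$, split $\delta=\delta_M\cdot\delta_P$ according to the decomposition of positive roots into those of $M$ and those in $\Lie U$, factor the Haar measure on $U_B$ as $U_{B_M}\ltimes U$, and use that $\delta_P$ kills unipotents in $M$ so that the $|\delta_P|^{1/2}$ factor pulls out of the inner integral. One small remark: the final sentence about the bottom arrow being ``forced'' by injectivity of the vertical maps is slightly misleading as stated, since the content of the lemma is precisely that the composite lands in the $\Omega$-invariants and hence factors through the inclusion; but that is exactly what your computation $\psi^\wedge(t)=\phd(t)$ establishes (the left-hand side is a priori only $\Omega_M$-invariant, the right-hand side is $\Omega$-invariant by Theorem~\ref{satake}), so no real gap is present.
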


We want to bring the dual group into play whose class functions serve as a source of $\Omega$-invariant elements of $\H(T(k)//T_c,\c)$. Identify 
$\H(T(k)//T_c,\c)$ with $\c[X_*(T)]$ by sending a cocharacter $\chi:\g_m\rightarrow T$ to $1_{\chi(\pi^{-1})T_c}\in\H(T(k)//T_c,\c)$. A class function on 
$\Gd$ restricts to a $\Omega$-invariant function on $\Td$. By the very definition of the dual torus the ring of algebraic functions on $\Td$ is our $\c[X_*(T)]$. 
Finally by a classical argument of Chevalley the subalgebra of its $\Omega$-invariants consists precisely of functions which arise from class functions 
on $\Gd$. For example let $\chi:\g_m\rightarrow G$ be a central cocharacter. We have a corresponding $\chd:\Gd\rightarrow\g_m$. Under the above 
considerations the function $\Gd\rightarrow\c$, $g\mapsto\chd(g)$ corresponds to the element $\chi(\pi^{-1}):=1_{\chi(\pi^{-1})K}\in\H(G(k)//K,\c)$.

\subsection{The Hecke polynomial}
Consider a conjugacy class of minuscule cocharacters $\mu:\g_m\rightarrow G$. If $T\subset B\subset G$ is as before, 
we may consider the unique representative of that conjugacy class which has image in $T$ and which is dominant 
relative to $B$, that means that $<\mu,\alpha>\geq0$ for all positive roots $\alpha$. We set $d=<\mu,\delta>$.\\
Write $\md$ for the corresponding character of $\Td$. Let $r:\Gd\rightarrow GL(V)$ be the irreducible 
representation of $\Gd$ with highest weight $\md$ relative to $\Bd$ and $\Td$, as in ~\cite{blasius}.
\begin{defn}

Let $G/k$ be split reductive and $\mu$ be a conjugacy class of minuscule cocharacters of $G$. Consider the associated representation
$$r:\Gd\rightarrow GL(V)$$
Then $\det_V(t-q^{d/2}r(g))$ is a monic polynomial in $t$ whose coefficients are class functions on $\Gd$. It thus defines a polynomial
$$H_{G,\mu}\in\H(G(k)//K,\c)[t],$$
which is called the Hecke polynomial.
\end{defn}

The following fact describes a zero of $H_{G,\mu}$ in a larger Hecke algebra:

\begin{prop}
\label{trivial}
Let $G/k$ be split reductive with a hyperspecial subgroup $K$. Let $\mu$ be a minuscule cocharacter of $G$. Let $P$ be the parabolic subgroup which is determined by 
$\mu$, i.e. the biggest parabolic subgroup of $G$ relative to which $\mu$ is dominant. Let $M$ be the Levi quotient of $P$ and let $L$ be the image of $K\cap P(k)$ in 
$M(k)$. Let $H_{G,\mu}\in\H(G(k)//K,\c)[t]$ be the Hecke polynomial. Then
$$H_{G,\mu}(\mu(\pi^{-1}))=0$$
here we regard the equation taking place in $\H(M(k)//L,\c)$ via the natural map $\S\circ|_P:\H(G(k)//K,\c)\rightarrow\H(M(k)//L,\c)$.
\end{prop}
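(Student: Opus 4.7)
The plan is to apply the Satake isomorphism for $M$ to both sides of the claimed identity, reducing it to a statement in $\c[X_*(T)]^{\Omega_M}$ that becomes visible as the vanishing of a single factor in an explicit product.

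First I compute the $M$-Satake transform of $\mu(\pi^{-1})\in\H(M(k)//L,\c)$. Because $\mu$ is central in $M$ (this is the very reason for choosing $P$ as the parabolic associated to $\mu$), the element $\mu(\pi^{-1})$ lies in $Z(M)(k)\subset T(k)$ and is fixed by $\Omega_M$. Writing $[\lambda]:=1_{\lambda(\pi^{-1})T_c}$ for the standard basis of $\c[X_*(T)]$, restricting $1_{\mu(\pi^{-1})L}$ to $B_M:=B\cap M$, applying Lemma \ref{effect}, and multiplying by $|\delta_M|^{1/2}(\mu(\pi^{-1}))=q^{\langle\mu,\delta_M\rangle/2}$ identify the Satake transform of $\mu(\pi^{-1})$ with $[\mu]$, since $\delta_M$ is a sum of roots of $M$ and therefore vanishes on $Z(M)$.

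Next I invoke the commutative diagram of the preceding lemma to relate the $M$-Satake of $\S\circ|_P(\phi)$ to the $G$-Satake of $\phi$. Tracking the two $|\delta|^{1/2}$-normalisations together with the relation $\delta_B=\delta_P+\delta_M$ shows that the $M$-Satake of $\S\circ|_P(\phi)$ is obtained from the $G$-Satake of $\phi$ by applying the ring endomorphism of $\c[X_*(T)]$ sending $[\lambda]\mapsto q^{-\langle\lambda,\delta_P\rangle/2}[\lambda]$ (which encodes pointwise multiplication by $|\delta_P|^{-1/2}$). Since $r$ is minuscule with highest weight $\md$, its weights on $\Td$ are precisely the $\Omega$-orbit $\Omega\mu$, each with multiplicity $1$, so the $G$-Satake of $H_{G,\mu}$ factors as $\prod_{\chi\in\Omega\mu}(t-q^{d/2}[\chi])$. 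Applying the above twist coefficient-wise, the $M$-Satake of $\S\circ|_P(H_{G,\mu})$ becomes
\begin{equation*}
\prod_{\chi\in\Omega\mu}\bigl(t - q^{(d-\langle\chi,\delta_P\rangle)/2}[\chi]\bigr).
\end{equation*}

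Evaluating at $t=[\mu]$, the factor indexed by $\chi=\mu$ reads $[\mu] - q^{(d-\langle\mu,\delta_P\rangle)/2}[\mu]$, which vanishes iff $d=\langle\mu,\delta_P\rangle$. But $d=\langle\mu,\delta\rangle=\langle\mu,\delta_P\rangle+\langle\mu,\delta_M\rangle$, and the second summand is zero exactly as in the first step; so this factor vanishes, the whole product is zero in $\c[X_*(T)]^{\Omega_M}$, and the conclusion in $\H(M(k)//L,\c)$ follows because the Satake transform is an isomorphism. The main technical obstacle is the careful bookkeeping of the half-power twists $|\delta_P|^{1/2}$, $|\delta_M|^{1/2}$, $|\delta_B|^{1/2}$ through the three Satake maps; once those cancellations are in place, the argument reduces to the single observation that $\mu$, being $\Omega_M$-fixed, is itself a weight of the minuscule representation $r$.
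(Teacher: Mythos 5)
Your proof is correct and follows essentially the same route as the paper's: pass through the Satake isomorphism for $M$, track the $|\delta_P|^{1/2}$-normalisation, and reduce to the identity $d=\langle\mu,\delta_P\rangle$ together with the fact that $\md$ appears as a weight of $V$. The only differences are cosmetic in organization: the paper pushes the $|\delta_P|^{1/2}$-twist onto the evaluation point, computing $|\delta_P|^{1/2}\cdot\mu(\pi^{-1})=q^{d/2}\mu(\pi^{-1})$ and then observing $\det_V(q^{d/2}\md(g)-q^{d/2}r(g))=0$ because $\md$ is an eigenvalue of $r(g)$, whereas you push the twist coefficient-wise onto the factored form $\prod_{\chi\in\Omega\mu}(t-q^{d/2}[\chi])$ of the $G$-Satake of $H_{G,\mu}$; and the paper verifies $d=\langle\mu,\delta_P\rangle$ directly by identifying both sides with $\dim\gg_1$ in the $\mu$-grading $\gg=\gg_{-1}\oplus\gg_0\oplus\gg_1$, whereas you deduce it from $\delta=\delta_P+\delta_M$ and $\langle\mu,\delta_M\rangle=0$. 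These computations are literally the same fact stated two ways.
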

\begin{proof}
Let $U$ be the unipotent radical of $P$. Let $\delta_P$ be the character defined by $t\mapsto\det(ad(t)|_{\Lie U})$. Choose a Borel pair $T\subset B\subset P$. The 
Satake isomorphism of $G$ factors through the map $\S\circ|_P$, so that we may check the requested identity in $\H(T(k)//T_c,\c)$. We first check that upon applying
$$\begin{CD}
\H(M(k)//L,\c)@>\cdot|\delta_P|^{1/2}>>\H(M(k)//L,\c)
\end{CD}$$
to the element $\mu(\pi^{-1})$ we get $q^{d/2}\mu(\pi^{-1})$. To see this write $\gg=\gg_{-1}\oplus\gg_0\oplus\gg_1$ for the eigenspace decomposition of 
the adjoint action of $\mu$ on $\Lie G=\gg$, indexed by $\{-1,0,1\}$ as $\mu$ is minuscule. Due to $\Lie P=\gg_0\oplus\gg_1$ and $\Lie U=\gg_1$, we have
$$<\delta_P,\mu>=\dim\gg_1.$$
Now write $\gg=\gt\oplus\bigoplus_{\alpha\in\Phi}\gg_\alpha$ for the root space decomposition of 
$\gg$. As $\mu$ is dominant we have that $<\alpha,\mu>=1$ implies $\alpha\in\Phi^+$, therefore
$$<\delta,\mu>=\sum_{\alpha\in\Phi^+}<\alpha,\mu>=|\{\alpha\in\Phi|<\alpha,\mu>=1\}|=\dim\gg_1,$$
and hence
$$|\delta_P(\mu(\pi^{-1}))|^{1/2}=|\pi^{-1}|^{d/2}=q^{d/2}.$$
We have to show that the Satake transform of the element $q^{d/2}\mu(\pi^{-1})\in\H(M(k)//L,\c)$ is annihilated by $H_{G,\mu}$. Take 
into account that the cocharacter $\mu$ gives rise to a character $\md$ of $\Td$. The element $\mu(\pi^{-1})$ corresponds to the function 
$g\mapsto\md(g)$. The representation $V$ with highest weight $\md$ has a weight space decomposition $V=\bigoplus_\lambda V_\lambda$. Let 
$0\neq x\in V_{\md}$, so that one has $r(g)x=\md(g)x$, but then $\det_V(q^{d/2}\md(g)-q^{d/2}r(g))=0$, for all $g\in\Td$ which is what we wanted.
\end{proof}

\begin{rem}
It is a fact that $H_{G,\mu}$ is actually in $\H(G(k)//K,\q)[t]$.
\end{rem}

\section{A Proposition on $CM$-points}
\label{cm}
We write $\q^{algcl}$ for the algebraic closure of $\q$ in $\c$. In a sense which will be made precise later we prove that certain Hecke operators 
act trivially on the set of mod $\gP$-reductions of ordinary $CM$-points over $\q^{algcl}$. Our exposition of this is inspired by ~\cite{deligne}.\\
Then we move on to derive the correspondence theoretic congruence relation. We do that by appealing to a theorem of R.Noot on the existence of $CM$-lifts for ordinary 
closed points. Hence, we may apply the theory of complex multiplication to such a $CM$ lift, as the action of the Frobenius and of Hecke correspondences are controlled by 
the $CM$-type. To deduce the congruence relation one still has to assume that the correspondences in question have 'no vertical components' (see Definition ~\ref{nvc}).

\subsection{$CM$-points}
\label{point}
An abelian variety $A/\q^{algcl}$ is said to be of $CM$-type if one (hence every) maximal commutative semisimple subalgebra of 
$\End^0(A)$ has degree equal to $2\dim A$. Consider the integral and rational Tate modules
$$T_pA=\lim_\leftarrow A[p^n],$$
$$V_pA=\q\otimes T_pA$$
of $A$. If $A$ has complex multiplication by say $L\subset\End^0(A)$, we also have a $L_\gp$-vector space $V_\gp A$ for every prime $\gp$ of $L$ above $p$. 
These are the direct summands of $V_pA$ that correspond to the idempotents in the algebra $L\otimes\q_p=\bigoplus_{\gp|p}L_\gp$, which acts on $V_pA$. 
To $(A,L)$ one attaches the $CM$-type, which is by definition the set $\Sigma$ of algebra morphisms $\sigma:L\rightarrow\q^{algcl}$ with non-zero eigenspace 
in $\Lie(A)$, recall also that one calls the subfield $F$ of $\q^{algcl}$ generated by the elements $\{\tr(a|_{\Lie(A)})|a\in L\}$ the reflex field of $\Sigma$. Let $\gP$ 
be a prime of $\q^{algcl}$ with valuation ring $\O$. Then $A$ extends to an abelian $\O$-scheme $\A$, the special fiber of which we denote by $\Abar$. Recall 
finally that the $p$-rank of $\Abar$ is equal to $\dim A$ if and only if $\gP$ induces a completely split prime in $F$, i.e. if the $\gP$-adic completion of $F$ 
is $\q_p$. We call either $A$ or $(L,\Sigma)$ ordinary at $\gP$ if this is the case. If $(L,\Sigma)$ is ordinary at $\gP$ then there exists a set $S$ of primes 
of $L$ over $p$ such that $\Sigma=\{\sigma:L\rightarrow\q^{algcl}|\sigma^{-1}(\gP)\in S\}$. As in ~\cite[section 4]{deligne} we define refined Tate modules:

\begin{defn}
Let $L$ be a $CM$-algebra/$\q$ and let $\Sigma$ a be $CM$-type for $L$. Let $A/\q^{algcl}$ have $CM$ by $(L,\Sigma)$, assumed to be ordinary at $\gP$. Then we set:
\begin{eqnarray*}
V_p''A&=&\bigoplus_{\gp\in S} V_\gp A\\
V_p'A&=&\bigoplus_{\gp\notin S} V_\gp A\\
T_p''A&=&T_pA\cap V_p''A\\
T_p'A&=&(T_pA+V_p''A)\cap V_p'A,
\end{eqnarray*}
here $\gp$ always denotes a prime of $L$ over $p$, and $S$ is the set of $\gp|p$ with $\Sigma=\{\sigma|\sigma^{-1}(\gP)\in S\}$.
\end{defn}

These refined Tate modules make it possible to describe congruences between $CM$-abelian varieties:

\begin{lem}
\label{heuristic}
Let $A$ and $B$ be two abelian varieties over $\q^{algcl}$. Let $u\in\Hom(A,B)\otimes\z[1/p]$ be a $p$-isogeny with inverse
$v\in\Hom(B,A)\otimes\z[1/p]$. Assume $A$ and $B$ have $CM$ by $L\subset\End^0(A)=\End^0(B)$ with type $\Sigma$. Assume that
$\Sigma$ is ordinary at $\gP$. If
\begin{eqnarray}
u(T_p'A)=T_p'B&,&u(T_p''A)=T_p''B
\end{eqnarray}
then by reducing modulo $\gP$ one obtains isomorphisms ${\overline u}\in\Hom(\Abar,\Bbar)$, ${\overline v}\in\Hom(\Bbar,\Abar)$.
\end{lem}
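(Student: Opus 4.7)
My plan is to promote the quasi-isogeny $u$ to a genuine isomorphism $A\to B$ of abelian varieties over $\q^{algcl}$. Once this is done, the extension to the $\O$-models and the reduction modulo $\gP$ are automatic. Since CM forces good reduction, $A$ and $B$ extend to abelian $\O$-schemes $\A$ and $\B$ with special fibers $\Abar,\Bbar$. Ordinariness at $\gP$ together with the $L$-action yields a canonical splitting
$$\A[p^\infty]\;=\;\bigoplus_{\gp\mid p}\A[\gp^\infty],$$
and the theory of complex multiplication at ordinary primes (cf.~\cite[section 4]{deligne}) shows that $\A[\gp^\infty]$ is multiplicative for $\gp\in S$ and étale for $\gp\notin S$. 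Comparing generic fibre Tate modules identifies $T_p''A$ with the Tate module of the multiplicative part and $T_p'A$ with that of the étale part, and in particular yields a $\z_p$-module decomposition $T_pA = T_p''A\oplus T_p'A$ (and the analogue for $B$).

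Next I would show that $u$ respects the splitting $V_pA = V_p'A\oplus V_p''A$: since $T_p''A$ spans $V_p''A$ over $\q_p$, the assumption $u(T_p''A)=T_p''B\subseteq V_p''B$ forces $u(V_p''A)\subseteq V_p''B$, and symmetrically for $V_p'$. Coupled with the assumed bijectivity of $u$ on each refined lattice, $u$ carries $T_pA = T_p''A\oplus T_p'A$ isomorphically onto $T_pB = T_p''B\oplus T_p'B$. For primes $\ell\neq p$ the map $u$ is already an isomorphism $T_\ell A\to T_\ell B$, because it is invertible in $\Hom(A,B)\otimes\z[1/p]$.

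Thus $u$ is integral at every prime. Writing $p^n u\in\Hom(A,B)$ for some $n\ge 0$, the inclusion $p^n u(T_pA)\subseteq p^n T_pB$ shows that $p^n u$ kills the group scheme $A[p^n]$; hence it factors through $[p^n]:A\to A$ as $p^n\psi$ for some $\psi\in\Hom(A,B)$, forcing $u=\psi\in\Hom(A,B)$. The same reasoning applies to $v$, and then $uv=1_B$, $vu=1_A$ promote $u$ to a genuine isomorphism $A\to B$. By the uniqueness of extensions of abelian varieties with good reduction, $u$ and $v$ extend to mutually inverse isomorphisms of abelian $\O$-schemes $\A\to\B$ and $\B\to\A$, whose reductions are the desired $\bar u$ and $\bar v$. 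The principal obstacle is the content of the first paragraph: one must invoke CM theory at $p$ to align the refined Tate modules $T_p',T_p''$ with the geometric connected–étale decomposition of the $p$-divisible group — this is what produces the integral splitting $T_pA=T_p''A\oplus T_p'A$ and makes the rest of the argument essentially formal.
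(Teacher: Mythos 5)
Your argument rests on the claim that $T_pA = T_p''A \oplus T_p'A$ (and likewise for $B$), but this is not automatic, and the lemma is stated precisely so as to cover cases where it fails. You correctly identify $T_p''A$ and $T_p'A$ with the Tate modules of the multiplicative part and of the \'etale quotient of $\A[p^\infty]$; however, the resulting extension $0 \to T_p''A \to T_pA \to T_p'A \to 0$ need not split compatibly with the inclusion $T_pA \subset V_pA$. Concretely, only the order $R \subset L$ that actually acts on $A$ extends to $\A$ (not all of $L$), and the idempotents of $L\otimes\q_p$ cutting out the primes in $S$ need not lie in $R\otimes\z_p$ when $R$ is not maximal at $p$. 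In that case $T_pA\cap V_p'A$ is strictly smaller than $T_p'A$, and $T_p'A\oplus T_p''A$ and $T_pA$ can even be incomparable lattices in $V_pA$. The paper flags exactly this: it \emph{assumes}, as a stated reduction, that $R\otimes\z_p$ decomposes (equivalently that $T_pA$ decomposes), and justifies this by replacing $A$ with an isogenous $A^*$ having this property and applying the lemma twice, to $A^*\to A$ and $A^*\to B$.

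Even after that reduction your strategy does not quite close, because $T_pB$ still need not decompose, so $u(T_pA)$ can be a lattice different from $T_pB$; the quasi-isogeny $u$ is then \emph{not} an isomorphism over $\q^{algcl}$, although its reduction $\overline u$ is. The lemma's conclusion is genuinely weaker than the characteristic-zero statement you try to prove. The paper's proof therefore does not attempt to promote $u$ to an isomorphism of abelian varieties: instead it works with the kernel $\K$ of the honest isogeny $p^nu:\A\to\B$, splits it as $\Kbar=\Kbar'\oplus\Kbar''$ only after passing to the special fiber (where the connected--\'etale sequence over the perfect residue field splits canonically), and uses the hypotheses on $u$ to identify $\K''=\A''[p^n]$ and $\K'=\A'[p^n]$, yielding $\Kbar=\Abar[p^n]$ and hence the assertion about $\overline u$. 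To repair your argument you should insert the isogeny reduction for $A$ and replace the characteristic-zero isomorphism claim with an analysis of $\ker(p^n\overline u)$ on the special fiber along these lines.
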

\begin{proof}
Write $\A$ and $\B$ for the extensions of $A$ and $B$ to abelian $\O$-schemes. Let $R\subset L$ be the order which acts on $A$, let $R''$ and $R'$ 
be the images of $R\otimes\z_p$ in $\bigoplus_{\gp\in S}L_\gp$ and $\bigoplus_{\gp\notin S}L_\gp$. We assume that $R\otimes\z_p$ is a direct sum 
of $R''$ and $R'$, or equivalently, that $T_pA$ is a direct sum of $T_p'A$ and $T_p''A$. This is no loss of generality as we may replace $A$ by an 
isogenous abelian variety $A^*$ which has this property, to then apply the result twice to isogenies $A^*\rightarrow A$ and $A^*\rightarrow B$.\\
Note that $\A[p^\infty]$ decomposes into a direct sum of $p$-divisible groups $\A'$ and $\A''$, corresponding to the idempotents in $R'\oplus R''$. Let 
$\K\subset\A$ be the kernel of (the extension to $\A$ of) the isogeny $p^nu$. Let further $\K''$ be the intersection of $\K$ with $\A''$, and let $\K'=\K/\K''$ 
be the image of $\K$ in $\A'$. Let $\B''$ be the $p$-divisible group $\A''/\K''$ and $\B'$ be $\A'/\K'$. Notice that the ($p$-divisible) groups $\K''$, $\A''$, and 
$\B''$ are multiplicative whereas $\K'$, $\A'$, and $\B'$ are \'etale, by ~\cite[th\'eor\`eme 3]{hondatate}. From definition one has a short exact sequence
$$0\rightarrow\B''[p^\infty]\rightarrow\B[p^\infty]\rightarrow\B'[p^\infty]\rightarrow0$$
of which the special fiber splits canonically, consequently $\Kbar=\Kbar'\oplus\Kbar''$. Moreover, from the assumptions 
on $u$ one infers $\K'=\A'[p^n]$, and $\K''=\A''[p^n]$, it follows that $\Kbar=\Abar[p^n]$, which is what we wanted.
\end{proof}

\subsection{Shimura subvarieties}
\label{shsub}
We move on to the consequences for the $CM$-points on a Shimura variety. We fix a Shimura datum $(G,X)$. An element $h$ of $X$ is a homomorphism:
$$h:Res_{\c/\r}\g_m\rightarrow G\times\r,$$
which by base change gives rise to
$$h_\c:\g_m\times\g_m\rightarrow G\times\c,$$
the restriction of this to the first copy of $\g_m$ is a cocharacter, denoted by
$$\mu_h:\g_m\rightarrow G\times\c,$$
which is minuscule, by the general framework of Deligne, see ~\cite[1.1.1]{corvallis} for more information. Furthermore, by loc.cit one calls the reflex field 
$E\subset\c$ the field over which the conjugacy class of $\mu_h$ is defined, this field does of course not depend on the choice of $h$ within $X$.\\
One calls a specific $h\in X$ to have $CM$ if there exists a rational torus $T\subset G$ so that $h(\c^\times)\subset T(\r)$. 
If this is the case then one attaches a reciprocity law $r$ to $h$ as follows: Let $F$ be the reflex field of $(T,\{h\})$, let 
$\mu_h:\g_m\times F\rightarrow T\times F$ be the cocharacter to $h$, and let finally be $r$ the composite of the maps below:
$$\begin{CD}
{(F\otimes\a)^\times}@>{\mu_h}>>{T(F\otimes\a)}@>{\n_{F/\q}}>>{T(\a)}
\end{CD}$$
In analogy to the discussion in subsection ~\ref{point} we say:

\begin{defn}
\label{ordinary}
A $CM$-type $h\in X$ with reflex field $F$ is called ordinary at some prime $\gP$ of $\q^{algcl}$ if the prime which it induces in $F$ is completely split.
\end{defn}

If $h$ is ordinary at $\gP$ then there is an induced imbedding $F\hookrightarrow\q_p$. By base change of $\mu_h$, an ordinary $h\in X$ 
gives rise to a $\q_p$-rational cocharacter $\mu_{h,\gP}:\g_m\times\q_p\rightarrow G\times\q_p$. There is a parabolic subgroup 
$P_{h,\gP}\subset G\times\q_p$ that goes with the cocharacter $\mu_{h,\gP}$ in the usual way, being the largest group for which 
$\mu_{h,\gP}$ is dominant, we set $U_{h,\gP}$ for the unipotent radical of $P_{h,\gP}$, and $M_{h,\gP}$ for the centralizer of $\mu_{h,\gP}$.\\
We also have to fix a level structure, to this end we introduce topological rings:
\begin{eqnarray*}
&&\zd=\prod_p\z_p\\
&&\ad=\q\otimes\zd\\
&&\a=\ad\oplus\r
\end{eqnarray*}
and consider some compact open subgroup $K\subset G(\ad)$. Once a smooth connected extension of $G$ to a group $\z$-scheme 
$\G$ is fixed one can conveniently decompose $K$ into a product $K_R\times\prod_{p\notin R}K_p$, with $R$ some sufficiently big 
finite set of primes, $K_R$ some compact open subgroup of $\prod_{p\in R}G(\q_p)$, and $K_p=\G(\z_p)$ hyperspecial.\\
Now let ${_K}M(G,X)$ be a canonical model in Deligne's sense. It is an algebraic variety over $E$ with certain nice 
properties. Most notably one has that the complex points of ${_K}M(G,X)$ are parameterized by the double quotient:
$${_K}M_\c(G,X)=G(\q)\backslash(G(\ad)/K\times X),$$
i.e. for any $g\in G(\ad)$ and $h\in X$ the $G(\q)$-orbit of the pair $gK\times h$ is a point $x$ on ${_K}M_\c(G,X)$. We denote this point by $x=[gK\times h]$. 
It is called a $CM$ point if $h$ has $CM$, and the significance of those stems from them being rational over $\q^{algcl}$ and satisfying the formula:
$$\tau([gK\times h])=[r(t)gK\times h]$$
where $\tau$ is an element of the absolute Galois group of $F$ with abelianization equal to the Artin symbol of the idele $t\in(F\otimes\a)^\times$, our normalization of class field theory does follows that of Deligne (as did our discussion of the Hecke polynomial). One further bit of notation: If $x=[gK\times h]\in{_K}M_\c(G,X)$ is a $CM$ point, we denote 
by $\mu_{x,\gP}$ the cocharacter $g_p^{-1}\mu_{h,\gP}g_p$, where $g_p$ is the component at $p$ of the adelic group element $g$, it is easier to work with $\mu_{x,\gP}$ 
than with $\mu_{h,\gP}$ as it depends only on $x$ and not on a choice of representation $[gK\times h]$, the same applies to $P_{x,\gP}$, $U_{x,\gP}$, and $M_{x,\gP}$.\\
Before we state the result of this section we need to consider embeddings of ${_K}M(G,X)$ into Siegel space: For any positive integer $g$, we endow 
$\z^{2g}$ with the standard antisymmetric perfect pairing. We write $GSp_{2g}$ for the reductive group $\z$-scheme of symplectic similitudes 
of $\z^{2g}$, and $\gh_g^\pm$ for the Siegel double half space of genus $g$, $(GSp_{2g}\times\q,\gh_g^\pm)$ is a Shimura datum. From now 
on we assume that $(G,X)$ is a datum of Hodge type, which means that there exists an injection $i:G\hookrightarrow GSp_{2g}\times\q$ of algebraic 
groups such that $i$ carries the conjugacy class $X$ to the conjugacy class $\gh_g^\pm$. Recall from ~\cite[proposition 1.15.]{bourbaki}:

\begin{lem}
\label{subsh}
Let $i:(G,X)\hookrightarrow (G',X')$ be an imbedding of Shimura data. For every compact open subgroup $K\subset G(\ad)$ 
there exists a compact open $H\subset G'(\ad)$ containing $i(K)$ and small enough in order to make the induced mapping
$${_K}M_\c(G,X)\rightarrow{_H}M_\c(G',X').$$
a closed immersion.
\end{lem}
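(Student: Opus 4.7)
The plan is to exhibit $H$ in two stages. First, since $i:G\hookrightarrow G'$ is a closed immersion of $\q$-algebraic groups, the subgroup $i(G(\ad))$ is closed in $G'(\ad)$, and one may choose a compact open $H_0\subset G'(\ad)$ with $i(K)\subset H_0$ and $H_0\cap i(G(\ad))=i(K)$. I would then shrink $H_0$ to a smaller compact open $H$ as dictated by the injectivity step below, always preserving the normalisation $H\cap i(G(\ad))=i(K)$.

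Next I would verify that, for $H$ sufficiently small, the induced map
$${_K}M_\c(G,X)\longrightarrow{_H}M_\c(G',X')$$
is injective. If $[g_1K\times h_1]$ and $[g_2K\times h_2]$ share the same image, one obtains $\gamma'\in G'(\q)$ with $\gamma' h_1=h_2$ and $\gamma' i(g_1)\in i(g_2)H$. Because $X$ is a single $G(\r)$-conjugacy class, pick $\gamma_0\in G(\r)$ with $\gamma_0 h_1=h_2$, so that $z:=i(\gamma_0)^{-1}\gamma'$ centralises $h_1$ in $G'(\r)$. The key move is to force $\gamma'\in i(G(\q))$: the set $G'(\q)\cap i(g_2)H i(g_1)^{-1}$ is finite by the discreteness of $G'(\q)$ in $G'(\ad)$, and after further shrinking $H$ one can arrange that every element of this finite set which moves a point of $i(X)$ into $i(X)$ already lies in $i(G(\q))$. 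Granted $\gamma'=i(\gamma_0')$ with $\gamma_0'\in G(\q)$, the normalisation $H\cap i(G(\ad))=i(K)$ then yields $\gamma_0' g_1 K=g_2 K$, so the two points coincide already on ${_K}M_\c(G,X)$.

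Once injectivity is secured, the map is a local immersion of complex-analytic spaces because $X\hookrightarrow X'$ is a closed embedding of Hermitian symmetric domains and $G(\ad)/K\to G'(\ad)/H$ has discrete fibres; the image is closed because every connected component of ${_K}M_\c(G,X)$ is a quotient of $X$ by an arithmetic subgroup of $G(\q)$ and $X$ itself is closed in $X'$, so the image is a finite disjoint union of closed arithmetic subvarieties. Finally, since ${_K}M(G,X)$ and ${_H}M(G',X')$ carry canonical algebraic models over their reflex fields, the analytic closed immersion descends algebraically by GAGA applied fibrewise over the connected components of the target and Borel's algebraicity theorem for maps between arithmetic quotients.

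The principal obstacle is the injectivity step, specifically the need to shrink $H$ so that the finite set $G'(\q)\cap i(g_2)H i(g_1)^{-1}$ contains no ``parasitic'' elements outside $i(G(\q))$ that nevertheless carry a point of $i(X)$ back into $i(X)$. This relies on combining the discreteness of $G'(\q)$ in $G'(\ad)$ with the fact that $i(X)$ is open in its $G'(\r)$-orbit inside the space of homomorphisms $\mathrm{Res}_{\c/\r}\g_m\to G'\times\r$, so that elements of $G'(\r)$ close to the identity which map $h_1$ into $i(X)$ automatically come from $i(G(\r))$ modulo the centraliser, yielding the desired control over the finite set.
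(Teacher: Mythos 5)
The paper does not prove this lemma at all: it simply cites it as Deligne's Proposition 1.15 from \emph{Travaux de Shimura} (reference \cite{bourbaki}), so there is no "paper's proof" to compare against, only the source you should have consulted.

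Your sketch has a genuine gap in the central finiteness step. You assert that $G'(\q)\cap i(g_2)Hi(g_1)^{-1}$ is finite "by the discreteness of $G'(\q)$ in $G'(\ad)$." That discreteness is false: $G'(\q)$ is discrete in the full adele group $G'(\a)=G'(\ad)\times G'(\r)$, not in the finite-adele group $G'(\ad)$, where its image is typically dense (already for $G'=\g_m$ one has $\q^\times$ dense in $\ad^\times$). Thus the set you declare finite need not be. To get finiteness one must combine the compact constraint at the finite places ($\gamma'\in i(g_2)Hi(g_1)^{-1}$) with the Archimedean constraint ($\gamma'\cdot i(h_1)=i(h_2)$, which places $\gamma'$ in a translate of the stabilizer of $i(h_1)$ in $G'(\r)$, compact modulo the centre) and then invoke discreteness of $G'(\q)$ in $G'(\a)$, taking care of the centre. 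You mention the Archimedean constraint but do not actually use it where it is needed. A second, unaddressed issue is uniformity: you shrink $H$ for a fixed pair $(g_1,g_2)$, but $g_1,g_2$ range over $G(\ad)/K$, which is infinite; Deligne's argument handles this via the finiteness of the class set $G(\q)\backslash G(\ad)/K$ (so that only finitely many $(g_1,g_2)$ need to be controlled), and that ingredient is absent from your write-up. The final passage from an injective holomorphic map to a \emph{closed} algebraic immersion is also stated rather than argued (properness of the analytic map, compatibility with the Baily–Borel or Borel algebraicity theorems, behaviour at the boundary in the non-compact case); this is exactly the content that Deligne packages in the cited proposition.

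Given that the paper leaves the proof to the literature, the appropriate response would have been to reproduce or cite Deligne's argument rather than reconstruct it; as it stands, the discreteness claim as written is incorrect and the uniformity over components is missing, so the proof does not go through.
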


Whenever groups $K$, and $H$ with properties as in the lemma are given, we regard ${_K}M_\c(G,X)$ (resp. its canonical model ${_K}M(G,X)$ over the reflex field 
$E\subset\c$) as a Shimura subvariety of ${_H}M_\c(G',X')$ (resp. ${_H}M(G',X')\times_{E'}E$). In particular we obtain canonical models of Hodge type Shimura varieties 
${_K}M(G,X)$ as subvarieties of ${_H}M(GSp_{2g},\gh_g^\pm)\times E$, cf. ~\cite[crit\`ere 2.3.1.]{corvallis}, ~\cite[corollaire 5.4.]{bourbaki}. To make things more explicit 
observe that $H$ can be written as $H_Q\times\prod_{p\notin Q}GSp_{2g}(\z_p)$ where $Q$ is a sufficiently big finite set of primes containing $R$, and $H_Q$ is some 
compact open subgroup of $\prod_{p\in Q}GSp_{2g}(\q_p)$. Write $n$ for the product of the primes in $Q$. Observe also that the canonical model 
${_H}M(GSp_{2g},\gh_g^\pm)$ over its reflex field $\q$ has a moduli interpretation in terms of abelian varieties with level $H$-structure suggesting an extension to a scheme over $\Spec\z[\frac{1}{n}]$: Based on considerations in ~\cite[chapter 5]{kottwitz} we consider the functor giving to some base scheme $X/\z[\frac{1}{n}]$ (connected and 
equipped with a choice of geometric point $x\rightarrow X$) the set of $\z[\frac{1}{n}]$-isogeny classes of triples $(A,\lambda,\bar\eta)/X$. Here is
\begin{itemize}
\item
$A/X$ a $g$-dimensional abelian scheme,
\item
$\lambda$ a homogeneous polarization of $A/X$, containing at least one member of degree a unit in $\z[\frac{1}{n}]$,
\item
$\bar\eta$ a $\pi_1(X,x)$-invariant $H_Q$-orbit of a family of isomorphisms $\eta=(\eta_p)_{p\in Q}:\prod_{p\in Q}\q_p^{2g}\rightarrow\prod_{p\in Q}V_p(A\times_Xx)$ 
taking for each $p\in Q$ the standard antisymmetric pairing on $\q_p^{2g}$ to a multiple of the $\lambda$-induced Weil pairing on the $\q$-tensorized $p$-adic 
Tate modules of the fiber of $A/X$ over $x$.
\end{itemize}
We write $\A_{g,H}$, for the coarse moduli scheme of this functor, it exists and is quasiprojective over $\z[\frac{1}{n}]$ by ~\cite[theorem 7.9]{mumford}. 
We write $A_{g,H}$ for its generic fiber, $\A_{g,H,p}$ and $\Abar_{g,H,p}$ for stalk and fiber over any prime $p\not|n$. By removing the non-ordinary 
locus in $\Abar_{g,H,p}$ we obtain open subschemes $\A_{g,H,p}^\circ\subset\A_{g,H,p}$, and $\Abar_{g,H,p}^\circ\subset\Abar_{g,H,p}$.\\
As observed in ~\cite[lemma 2.2/remark 2.3]{bu2} a $CM$ point $x\in{_K}M_\c(G,X)$ has ordinary mod-$\gP$ reduction on 
$\A_{g,H}$ if and only if $h$ is ordinary at $\gP$ in the sense of definition ~\ref{ordinary}. With this preparation we can now state:

\begin{lem}
\label{moreconsequence}
Let $(G,X)\hookrightarrow(GSp_{2g},\gh_g^\pm)$ give rise to a Shimura subvariety $M={_K}M(G,X)\hookrightarrow A_{g,H}\times E$. 
Let $\M\hookrightarrow\A_{g,H}\times\O_E$ be the schematic closure of $M$ in $\A_{g,H}\times\O_E$. Let $\O$ be a valuation 
ring of $\q^{algcl}$ with its maximal ideal $\gP$ not containing $n$. Let $x=[gK\times h]\in M(\q^{algcl})$ be a $CM$-point with 
ordinary reduction at $\gP$. Then the cocharacter $\mu_{x,\gP}$ is well-defined, we have for all $u\in U_{x,\gP}(\q_p)$:
\begin{itemize}
\item
The $\q^{algcl}$-points $[gK\times h]$ and $[guK\times h]$ extend to $\O$-valued points of $\M$.
\item
Their special fibers $\overline{[gK\times h]}$, and $\overline{[guK\times h]}$, which are thus defined, are equal.
\end{itemize}
\end{lem}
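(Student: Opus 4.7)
The plan is to interpret both $CM$-points via the Siegel embedding $\M\hookrightarrow\A_{g,H}\times\O_E$. Over $\q^{algcl}$ the two points map to triples $(A,\lambda,\bar\eta)$ and $(B,\lambda',\bar\eta')$ whose underlying complex tori agree (being determined by $h$), which share their prime-to-$p$ level structure, and whose integral $p$-adic Tate modules differ by translation by the element $u'=g_pug_p^{-1}\in U_{h,\gP}(\q_p)$. This element defines a $p$-isogeny $u':A\to B$ in the sense of Lemma~\ref{heuristic}, compatible with the common $CM$-action of the torus through which $h$ factors. Classical $CM$-theory ensures that $A$ and $B$ have good reduction at $\gP$, giving $\O$-valued points of $\A_{g,H}\times\O_E$. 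Since $\M\hookrightarrow\A_{g,H}\times\O_E$ is a closed immersion (being the schematic closure of $M$) and the generic fibers of these $\O$-valued points lie in $M$, the valuative criterion implies that they factor through $\M$; this proves the first bullet.

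For the second bullet, by the same closed-immersion argument, it suffices to prove equality of the reductions in $\Abar_{g,H,p}$. We apply Lemma~\ref{heuristic} to $u':A\to B$; the hypotheses $u'(T_p'A)=T_p'B$ and $u'(T_p''A)=T_p''B$ follow essentially tautologically from $u'\in U_{h,\gP}(\q_p)$. Indeed, $\mu_{h,\gP}$ splits $V_pA$ as a direct sum $V_p'A\oplus V_p''A$ which, by the ordinariness of $h$ at $\gP$, agrees with the \'etale/multiplicative splitting coming from the $CM$-structure and hence with the refined Tate decomposition of Section~\ref{point}. By the very definition of $P_{h,\gP}$, its unipotent radical $U_{h,\gP}$ stabilises $V_p''A$ set-wise and acts trivially on the quotient $V_pA/V_p''A\cong V_p'A$; a direct matrix computation in this splitting verifies both lattice identities. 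Lemma~\ref{heuristic} then produces an integral isomorphism $\Abar\cong\Bbar$ which automatically respects polarization and prime-to-$p$ level structure, yielding the claimed equality of reductions.

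The main technical point is the alignment of the Hodge-theoretic splitting of $V_pA$ defined by $\mu_{h,\gP}$ with the slope decomposition of the ordinary $p$-divisible group $\A[p^\infty]$: this is standard $CM$-theory, but it is precisely what is needed to identify $U_{h,\gP}(\q_p)$ with the stabiliser of the appropriate two-step filtration on $V_pA$ and thereby translate the condition $u\in U_{x,\gP}(\q_p)$ into the integrality hypotheses of Lemma~\ref{heuristic}.
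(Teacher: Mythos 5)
Your proof is correct and follows essentially the same route as the paper's: interpret the two points moduli-theoretically as a pair of CM abelian varieties in the same isogeny class with level structures differing by $u$, invoke Serre--Tate/CM theory for good reduction (and the schematic-closure property to descend to $\M$), observe that the element of the unipotent radical preserves the refined Tate decomposition, and then apply Lemma~\ref{heuristic} to obtain the isomorphism of reductions. Your remark that $U_{h,\gP}$ in fact acts trivially on $V_p''A$ itself (not merely stabilising it set-theoretically) is the precise point that yields the lattice identity $u'(T_p''A)=T_p''B$; you flag this as a ``direct matrix computation,'' which is where the paper's appeal to the structure of the Siegel unipotent radical lives.
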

\begin{proof}
By standard arguments of ~\cite{serretate} one finds that $CM$ points have good reduction on 
$\A_{g,H}\times\O_E$, and likewise on $\M$, cf. ~\cite[lemma 2.1]{bu2}. This shows the first assertion.\\
To do the second, observe that in terms of the moduli interpretation the point $x$ gives birth to the isogeny class of an abelian variety $A/\q^{algcl}$ with a 
homogeneous polarization $\q\lambda$ and a full level structure $\eta$. Moreover, $A$ has $CM$ say by the type $(L,\Sigma)$ with reflex $F$. The type will generate 
a decomposition $\q_p^{2g}=V'\oplus V''$, as we assumed that $\gP$ induces a completely split prime of $F$. It is not difficult to see that the $\gp$-component of the 
reciprocity law is the map $\q_p^\times\rightarrow GSp_{2g}(\q_p)$ acting trivially on $V'$ and acting with weight $1$ on $V''$, and our unipotent subgroup $U_{x,\gP}$ 
consists of all symplectic similitudes which preserve the maximal isotropic subspace $V''$. We shall look upon $U_{x,\gP}(\q_p)$ as a subgroup of $GSp_{2g}(\ad)$ 
placing $1$'s at all components different from $p$. Implicit in the identification ${_H}M(GSp_{2g},\gh_g^\pm)=A_{g,H}$ is the lattice $\z^{2g}\subset\q^{2g}$. The 
isogeny class of $(A,\q\lambda,\eta)$ contains a unique member with $\eta(\zd^{2g})=H_1(A,\zd)$. Similarly, the isogeny class of $xu=(A,\q\lambda,\eta\circ u)$ 
contains a unique member $B$ with $\eta(u(\zd^{2g}))=H_1(B,\zd)$, i.e. there exists a $\phi\in\Hom(A,B)\otimes\z[1/p]$ so that it compares $B$ to $A$ as follows:
$$\begin{CD}
{\ad^{2g}}@>>>{H_1(A,\ad)}\\
@A{u}AA@V{\phi}VV\\
{\ad^{2g}}@>>>{H_1(B,\ad)}
\end{CD}$$
The assumption on $u$ leads $\phi$ to satisfy the conditions of lemma ~\ref{heuristic}, so that we get an isomorphism
$\Abar\cong\Bbar$. Note finally that the level $H$ structure which is determined by $\eta$ will not be altered by $u$ as the latter
is placed in the $p$-component of the adele group. We can thus conclude that $\overline{[guK\times h]}=\overline{[gK\times h]}$.
\end{proof}

\subsection{Main theorem}
\label{turtleneck}
Our aim in this subsection is to construct elements in rings of correspondences $C_{fin}$, and $C_{rat}$, see appendix ~\ref{ccf} for their definition and some properties. 
We define the $(NVC)$-condition so that we can state and prove the congruence relation.\\
We keep our $(G,X)$ and retain assumptions on $K=K_R\times\prod_{p\notin R}K_p$ from subsection ~\ref{shsub}. In addition we assume that the group $G(\q)$ 
acts without fixed points on $G(\ad)/K\times X$. Level structures which fulfill this are called neat. This ensures that the Shimura variety ${_K}M_\c(G,X)$ is smooth. 
For every adelic group element $g$ we can consider the open compact subgroups $K$, $gKg^{-1}$ and $K\cap gKg^{-1}$, giving rise to Shimura varieties with 
these level structures. Multiplication by $g$ from the right gives an isomorphism from ${_{gKg^{-1}}}M(G,X)$ to ${_K}M(G,X)$ and we therefore obtain a map
$${_{K\cap gKg^{-1}}}M(G,X)\rightarrow{_K}M(G,X)\times{_K}M(G,X),x\mapsto(x,xg).$$
The push forward of the fundamental cycle of ${_{K\cap gKg^{-1}}}M(G,X)$ by this map is an element of $C_{fin}({_K}M(G,X),{_K}M(G,X))$ 
which we will denote by $[KgK]$, we write $T(g)\subset{_K}M(G,X)\times_E{_K}M(G,X)$ for the support of $[KgK]$. By $\q$-linear extension 
one gets a map $\iota:\H(G(\ad)//K,\q)\rightarrow\q\otimes C_{fin}({_K}M(G,X),{_K}M(G,X))$ from the assignment $1_{KgK}\mapsto[KgK]$. 
It is a homomorphism of algebras, we write $\iota_p$ for the restriction of $\iota$ to $\H(G(\q_p)//K_p,\q)$, here note that to any factorization 
of $K$ into $K_R\times\prod_{p\notin R}K_p$ we have a canonical factorization of $\H(G(\ad)//K,\q)$ into
$$\H(\prod_{p\in R}G(\q_p)//K_R,\q)\otimes\bigotimes_{p\notin R}\H(G(\q_p)//K_p,\q).$$
Let $\O_E$ be the ring of integers in $E$. By an integral model of ${_K}M(G,X)$ over $\O_E$ we mean a separated, smooth $\O_E$-scheme 
$\M$ that is equipped with an isomorphism $\M\times_{\O_E}E\cong{_K}M(G,X)$. If $\M$ is an integral model and $\gp$ a prime of $\O_E$, we write 
$\M_\gp$ and $\Mbar_\gp$ for stalk and fiber of $\M$ over $\gp$. Integral models exist and any two ones $\M_1$ and $\M_2$ are generically the same, 
i.e. there exists an integer $n$ and an isomorphism $\M_1\times\z[\frac{1}{n}]\cong\M_2\times\z[\frac{1}{n}]$ inducing the identity on the generic fiber, 
in particular one has $\M_{1,\gp}\cong\M_{2,\gp}$ and $\Mbar_{1,\gp}\cong\Mbar_{2,\gp}$ for all but finitely many primes. Once an integral model of 
${_K}M(G,X)$ is chosen we write $\T(g)$ for the Zariski closure of $T(g)$ in $\M\times_{\O_E}\M$, similarly we define $\T_\gp(g)$, and $\Tbar_\gp(g)$\\
Recall that a morphism $p:X\rightarrow Y$ between varieties is called generically finite if for every generic point $\eta$ of $X$ the residue field $k(\eta)$ is a 
finite extension of $k(p(\eta))$. The main result of this section is the following, $H_p(t)$ stands for the polynomial, that was denoted $H_{G\times\q_p,\mu}$ 
in section ~\ref{Hecke}.

\begin{defn}
\label{nvc}
Let ${_K}M(G,X)$ over $E$ be the canonical model coming from a Shimura datum $(G,X)$, and neat level structure $K$. Let $\M$ be an integral model. 
We say that ${_K}M(G,X)$ has no vertical components $(NVC)$ if for all but at most finitely many exceptional primes $\gp|p$ of $\O_E$, the two projection 
maps from $\Tbar_\gp(g)$ to $\Mbar_\gp$ are generically finite for all $g\in G(\q_p)$. By the previous remarks this does not depend on the choice of $\M$.
\end{defn}

\begin{thm}
\label{mI}
Let $G$ be a reductive $\q$-group which is an inner form of a split one. Let $(G,X)$ be a Shimura datum of Hodge type. 
Let $K$ be a neat level structure, and let $M={_K}M(G,X)$ be the corresponding canonical model over $\q$. Assume that 
it has $(NVC)$, and let $\M/\z$ be an integral model. Then one has for all but a finite number of rational primes $p$:
\begin{itemize}
\item
$\iota_p$ is a $\q$-linear map from $\H(G(\q_p)//K_p,\q)$ to the $\q$-vector space of correspondences $\q\otimes C(\M_p,\M_p)$.
\item
Denote by $\ibar_p$ the precomposition of $\iota_p$ with the specialization map $\sigma:\q\otimes C(\M_p,\M_p)\rightarrow\q\otimes C(\Mbar_p,\Mbar_p)$ 
as introduced in appendix ~\ref{speci}. Up to rational equivalence (in the sense of definition \ref{ccf}) $\ibar_p$ is a ring homomorphism, 
and in the $\q$-algebra $\q\otimes C_{rat}(\Mbar_p,\Mbar_p)$, the equation $\ibar_p(H_p)(\Gamma_p)=0$ holds, 
where $\Gamma_p\in C_{fin}(\Mbar_p,\Mbar_p)$ is the geometric Frobenius correspondence.
\end{itemize}
\end{thm}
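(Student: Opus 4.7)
The plan is to establish the three assertions in order, with the bulk of the work going into the last (Hecke polynomial) identity.

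\textbf{First two bullets.} For the first bullet, fix $g\in G(\q_p)$. By definition $\iota_p(1_{K_pgK_p})=[K_pgK_p]$ is supported on the closed subscheme $\T_p(g)\subset\M_p\times_{\z_{(p)}}\M_p$. By the $(NVC)$ hypothesis both projections $\Tbar_\gp(g)\rightrightarrows\Mbar_p$ are generically finite for all but finitely many $p$, and the generic projections $T(g)\rightrightarrows M$ are finite to begin with. So $\T_p(g)$ is a genuine correspondence in the sense of appendix \ref{ccf}, not only a finite one at the generic fiber. This gives the first bullet; it is harmless to shrink the finite exceptional set further in what follows. For the second bullet, the specialization map $\sigma$ is a ring homomorphism modulo rational equivalence for proper correspondences on a smooth scheme over a discrete valuation ring (this is the content of appendix \ref{speci}), and since $\iota_p$ is itself a ring homomorphism (obtained from the product on cosets that was worked out in section 2), the composite $\ibar_p=\sigma\circ\iota_p$ is multiplicative in $\q\otimes C_{rat}$.

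\textbf{Strategy for the Hecke polynomial identity.} It suffices to check that the cycle $\ibar_p(H_p)(\Gamma_p)\in C(\Mbar_p\times\Mbar_p)$ restricts to zero over a Zariski-dense set of closed points of the diagonal in $\Mbar_p$, since a correspondence with support of less than maximal dimension in one fiber direction is rationally equivalent to zero in $C_{rat}$. The density will come from Noot's theorem: every component of the ordinary locus $\Mbar_p^\circ$ contains a dense set of closed points admitting a $CM$-lift to $\q^{algcl}$ with ordinary reduction in the sense of definition \ref{ordinary}. So pick such a closed point $\bar y\in\Mbar_p^\circ$ and a $CM$-lift $x=[gK\times h]\in M(\q^{algcl})$ reducing to $\bar y$ modulo some $\gP\mid p$.

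\textbf{Reciprocity and the role of Proposition \ref{trivial}.} At the $CM$-point $x$, the action of the geometric Frobenius $\Gamma_p$ on the reduction $\bar x=\bar y$ is controlled by Shimura reciprocity for $(T,\{h\})$: choosing the uniformizer $\pi$ at the distinguished completion $F\hookrightarrow\q_p$ produced by the ordinariness hypothesis, one finds that $\Gamma_p(\bar x)$ agrees with the image of $x$ under the Hecke correspondence $[K_p\,\mu_{x,\gP}(\pi^{-1})\,K_p]$, up to the weight-normalization $q^{d/2}$ that is built into the definition of $H_p$ in section \ref{Hecke}. Meanwhile Lemma \ref{moreconsequence} says that for every $u\in U_{x,\gP}(\q_p)$ the mod-$\gP$ reduction of $xu$ is again $\bar x$; in other words, the Hecke correspondences supported in $U_{x,\gP}(\q_p)\subset P_{x,\gP}(\q_p)$ act trivially on $\bar x$. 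Combining these two remarks shows that for any $\phi\in\H(G(\q_p)//K_p,\q)$, the cycle $\ibar_p(\phi)$ restricted at $\bar x$ factors through the Hecke action of the Levi $M_{x,\gP}$ coming from $\phi$ via the composite $\S\circ|_{P_{x,\gP}}$ of section 2. Therefore the formal substitution $t\mapsto\Gamma_p$ at $\bar x$ is computed by the substitution $t\mapsto q^{d/2}\mu_{x,\gP}(\pi^{-1})$ in $\H(M_{x,\gP}(\q_p)//L,\c)$, and the latter vanishes identically by Proposition \ref{trivial}. Hence $\ibar_p(H_p)(\Gamma_p)\cdot\bar x=0$ for every such $\bar x$. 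Since these $\bar x$ are Zariski-dense in $\Mbar_p$, the correspondence $\ibar_p(H_p)(\Gamma_p)$ has fibre of strictly smaller dimension over a dense open, forcing it to be rationally equivalent to zero, which is the desired conclusion.

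\textbf{Main obstacle.} The delicate point is making rigorous the identification, at an individual ordinary $CM$ reduction $\bar x$, between the restriction of $\ibar_p(\phi)$ to $\bar x$ and the restriction of $\ibar_p^{M_{x,\gP}}\bigl((\S\circ|_{P_{x,\gP}})\phi\bigr)$ to $\bar x$; this requires patching together Shimura reciprocity (to identify $\Gamma_p$ with a torus translate), Lemma \ref{moreconsequence} (to kill the unipotent radical), and the compatibility of specialization with composition of Hecke correspondences carried out in appendix \ref{speci}. Once these are in place, Proposition \ref{trivial} supplies the algebraic vanishing, and Noot's density of $CM$-lifts promotes a pointwise identity into a global one in $\q\otimes C_{rat}(\Mbar_p,\Mbar_p)$.
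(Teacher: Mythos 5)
Your overall structure matches the paper's proof closely: embed into Siegel space, use Noot's $CM$-lifting for ordinary closed points, control the Frobenius action at the lift via Shimura reciprocity, kill the unipotent radical with Lemma~\ref{moreconsequence}, invoke Proposition~\ref{trivial} for the vanishing of $H_p$ at $\mu_{x,\gP}(p^{-1})$ in the Levi Hecke algebra, and then promote the pointwise vanishing to a global one. That is precisely the paper's route.

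There is, however, a misattribution in your argument for the first bullet that is worth correcting. You claim that $\T_p(g)$ being a correspondence in the sense of appendix~\ref{ccf} follows from $(NVC)$ (generic finiteness of $\Tbar_\gp(g)\to\Mbar_p$) together with finiteness on the generic fiber. But what the definition requires is \emph{properness} of both projections from the Zariski closure $\T_p(g)\to\M_p$, and generic finiteness of the special fiber plus finiteness of the generic fiber does not imply this. The paper obtains properness from the Siegel embedding: the correspondences on $\A_{g,H,p}$ have proper projections by the valuative criterion, and the closed subscheme $\T_p(g)\hookrightarrow\T_p(i(g))$ inherits this since everything in sight is separated (this is Lemma~\ref{properfinite}(i) of the paper). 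The $(NVC)$ hypothesis is used elsewhere: it supplies the \emph{generic finiteness} of $\Tbar_p(g)\to\Mbar_p$ that is required as a hypothesis of the pointwise-to-global Lemma~\ref{equality}. In your sketch this is the step where you deduce that vanishing of $\ibar_p(H_p)(\Gamma_p)\cdot\bar x$ for a Zariski-dense set of $\bar x$ forces the correspondence to vanish, and it is precisely there that $(NVC)$ belongs.

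Two further remarks on precision. First, the density of the ordinary locus $\Mbar_p^\circ\subset\Mbar_p$ is a separate input (cf.\ Wedhorn, or~\cite{bu2} as the paper cites) and not itself a consequence of Noot's theorem; Noot is needed to produce a $CM$-lift once an ordinary closed point is given. Second, the phrase \textquotedblleft up to the weight-normalization $q^{d/2}$\textquotedblright\ in your reciprocity step is a slight conflation: the reciprocity law sends $\Gamma_p$ to the bare cocharacter $\mu_{x,\gP}(p^{-1})$, and the $q^{d/2}$ appears on the other side of the equation, already incorporated in the definition of $H_p$ and accounted for by the $|\delta_P|^{1/2}$ twist inside Proposition~\ref{trivial}. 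None of this changes the outcome, but the bookkeeping in the paper keeps the two things cleanly separated, which is where Lemma~\ref{effect} and Lemma~\ref{help} do real work in rewriting $\S(A_i|_P)$ as an explicit coset sum before applying Proposition~\ref{trivial}.
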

\begin{proof}
As $(G,X)$ is a Hodge type datum, we can choose an integer $g$, and an imbedding $i:G\hookrightarrow GSp_{2g}\times\q$ 
carrying $X$ to $\gh_g^\pm$. By lemma ~\ref{subsh} there exists a level structure $H\subset GSp_{2g}(\ad)$ in order to 
obtain a closed immersion of canonical models $M\hookrightarrow A_{g,H}$. Let $Q$ be a finite set of primes, such that:
\begin{itemize}
\item
the base change to $\z[\prod_{p\in Q}p^{-1}]$ of $\M$ and of the Zariski closure of $M$ in $\A_{g,H}$ coincide,
\item
$H$ and $K$ have factorizations $H_Q\times\prod_{p\notin Q}GSp_{2g}(\z_p)$, and $K_Q\times\prod_{p\notin Q}K_p$, and $K_p$ is hyperspecial,
\item
when writing $\Mbar_p^\circ$ for the intersection of $\Mbar_p$ with $\Abar_{g,H,p}^\circ$, then $\Mbar_p^\circ$ is Zariski dense in $\Mbar_p$, for all $p\notin Q$,
\item
the set of exceptional primes referred to in definition ~\ref{nvc} is contained in $Q$,
\end{itemize}

Except for the density condition it is clear, that a finite set $Q$ of primes as above exists. The density at all but finitely many primes, finally is a 
result in ~\cite[theorem 1.1]{bu2}, see also ~\cite[p.577]{wedhorn}. The assertion $\iota_p(\H(G(\q_p)//K_p,\q))\subset\q\otimes C(\M_p,\M_p)$ 
is an immediate consequence of the following easy auxiliary lemma, we denote by $\M_p^\circ$, and $\T_p^\circ(g)$ the intersection 
of $\M_p$, and $\T_p(g)$ with $\A_{g,H,p}^\circ$ and $\A_{g,H,p}^\circ\times\A_{g,H,p}^\circ$.

\begin{lem}
\label{properfinite}
Let $p$ be a prime not in $Q$, let $g$ be an element in $G(\q_p)$. In this situation one has:
\begin{itemize}
\item[(i)]
the two projection maps $p_1,p_2:\T_p(g)\rightarrow\M_p$ are proper.
\item[(ii)]
the two projection maps $p_1,p_2:\T_p(g)^\circ\rightarrow\M_p^\circ$ are finite.
\end{itemize}
\end{lem}
\begin{proof}
For Siegel space $\A_{g,H}$ both facts are well known, (i) follows from the valuative criterion of properness in
~\cite[corollaire 7.3.10(ii)]{EGAII}, and (ii) follows from the quasi-finiteness of the maps in question, together with (i).\\
We deduce from this the corresponding results for the Zariski closure of the Shimura subvariety. The element $i(g)\in GSp_{2g}(\q_p)$ defines closed 
subschemes $\T_p(i(g))$ and $\T_p^\circ(i(g))$ of $\A_{g,H,p}\times\A_{g,H,p}$ and $\A_{g,H,p}^\circ\times\A_{g,H,p}^\circ$. Consider the diagram
$$\begin{CD}
\T_p(i(g))@>>>\A_{g,H,p}\times\A_{g,H,p}@>>>\A_{g,H,p}\\
@AAA@AAA@AAA\\
\T_p(g)@>>>\M_p\times\M_p@>>>\M_p
\end{CD}$$
It shows that the composition of $\T_p(g)\rightarrow\M_p$ with $\M_p\rightarrow\A_{g,H,p}$ is a proper map, 
which is enough to prove as all the involved schemes are separated. The same argument applies to part (ii).
\end{proof}

We write $A_i\in\H(G(\q_p)//K_p,\q)$ for the coefficients of the Hecke polynomial $H_p(t)=\sum_iA_it^i$. In order to prove the theorem it suffices to show 
that the $\q\otimes C(\Mbar_p,\Mbar_p)$-element given by $\sum_i\ibar_p(A_i)\Gamma_p^i$ vanishes for all $p\notin Q$ (N.B.: this is well-defined, since 
$\Gamma_p$ lies in $C_{fin}(\Mbar_p,\Mbar_p)$). To this end we appeal to lemma ~\ref{equality}. If one restricts Hecke correspondences 
to the ordinary locus they are finite by lemma ~\ref{properfinite}, so the assumptions of lemma ~\ref{equality} are fulfilled if one base changes to the 
algebraic closure $\f_p^{algcl}$. Pick any $\xbar\in \Mbar^\circ_p(\f_p^{algcl})$. By ~\cite[corollary 3.9]{noot} one can find a lifting to a $\O$-valued 
$CM$-point $\gx$, here $\O$ is a valuation ring of $\q^{algcl}$ corresponding to some prime $\gP|p$ of $\q^{algcl}$. The generic fiber of $\gx$ is of the 
form $x=[gK\times h]$, and thus gives rise to a rational torus $T\subset G$, and a cocharacter $\mu_{x,\gP}$. The coefficients of the Hecke polynomial are 
$K_p$-invariant $\q$-linear combinations of left $K_p$-cosets of $G(\q_p)$ that can be written as expressions of the form
$$A_i=\sum_jn_i^{(j)}g_i^{(j)}K_p\in\H(G(\q_p)//K_p,\q),$$
where the elements $g_i^{(j)}$ can be chosen in $P_{x,\gP}(\q_p)$. In a preliminary step we show that
$$\sigma(\sum_{i,j}n_i^{(j)}\gamma_\gP^i([gg_i^{(j)}K\times h]))=0$$
where $\gamma_\gP\in\Gal(\q^{algcl}/\q)$ is a Galois element that preserves $\O$ and induces the arithmetic Frobenius substitution on
$\O/\gP$. By using the reciprocity law it is equivalent to check that:
\begin{equation}
\label{prelim}
\sigma(\sum_{i,j}n_i^{(j)}[g\mu_{x,\gP}(p^{-i})g_i^{(j)}K\times h])=0
\end{equation}
The idea is to alter the group elements $g_i^{(j)}$ in a way which is justified by lemma ~\ref{moreconsequence}. Let $m_i^{(j)}$ be the image of $g_i^{(j)}$ in 
$M_{x,\gP}(\q_p)$. The lemma ~\ref{effect} gives the image of the coefficients $A_i$ of the Hecke polynomial under the map $\S\circ|_P$, they are equal to:
$$\S(A_i|_P)=\S(\sum_jn_i^{(j)}g_i^{(j)}(K_p\cap P_{x,\gP}(\q_p)))=\sum_jn_i^{(j)}m_i^{(j)}L_p$$
here $L_p$ is the group $U_{x,\gP}(\q_p)K_p/U_{x,\gP}(\q_p)$ which we regard as a subgroup of $M_{x,\gP}(\q_p)$ via the 
canonical splitting $P_{x,\gP}/U_{x,\gP}=M_{x,\gP}\hookrightarrow P_{x,\gP}$. Recall from lemma ~\ref{trivial} that the equation 
$H_p(\mu_{x,\gP}(p^{-1}))=0$ is valid in $\H(M_{x,\gP}(\q_p)//L_p,\q)$. By using lemma ~\ref{help} this is the same as saying that
$$0=\sum_{i,j}n_i^{(j)}\mu_{x,\gP}(p^{-i})m_i^{(j)}L_p,$$
holds in $\H(M_{x,\gP}(\q_p)/L_p,\q)$. Now fix a set of representatives $\gR\subset M_{x,\gP}(\q_p)$ for the left coset
decomposition $M_{x,\gP}(\q_p)=\bigcup_{r\in\gR}rL_p$. Consequently the previous equation implies that
$$0=\sum_{i,j}n_i^{(j)}r_i^{(j)}(K_p\cap P_{x,\gP}(\q_p))$$
holds in $\H(P_{x,\gP}(\q_p)/K_p\cap P_{x,\gP}(\q_p),\q)$, if the $r_i^{(j)}\in\gR$ denote the representatives for the $L_p$-cosets $\mu_{x,\gP}(p^{-i})m_i^{(j)}L_p$. It follows
$$0=\sum_{i,j}n_i^{(j)}[gr_i^{(j)}K\times h]$$
as elements in $Z_0(M\times\q^{algcl})$. As $r_i^{(j)}$ and $\mu_{x,\gP}(p^{-i})g_i^{(j)}$ have like image in $M_{x,\gP}(\q_p)/L_p$ there exist $u_i^{(j)}\in U_{x,\gP}(\q_p)$ with
$$r_i^{(j)}K_p=u_i^{(j)}\mu_{x,\gP}(p^{-i})g_i^{(j)}K_p.$$
Upon combining with
$$\overline{[gr_i^{(j)}K\times h]}=\overline{[g\mu_{x,\gP}(p^{-i})g_i^{(j)}\times h]}$$
one gets ~\eqref{prelim}.\\
Now the Hecke correspondences $\iota_p(A_i)|_{\M_p^\circ\times\M_p^\circ}$ are finite by lemma ~\ref{properfinite}, so lemma ~\ref{evaluate} can be used to compute
$$0=\sigma(\sum_ix^{\gamma_\gP^i}.\iota_p(A_i))=\sum_i\sigma(x^{\gamma_\gP^i}).\ibar_p(A_i)$$
And finally the compatibility of specialization with base change together with lemma ~\ref{frob} tells us that
$$\sum_i\sigma(x^{\gamma_\gP^i}).\ibar_p(A_i)=\sum_i\xbar.\Gamma_p^i.\ibar_p(A_i)$$
this concludes the proof.
\end{proof}

\section{Examples of canonical models with $(NVC)$}
\label{example}
We describe a particular Shimura datum: Let $D$ be a definite $\q$-quaternion algebra, let $\O_D$ be a maximal order and write $\bar{\cdot}$ for the 
standard involution. Let $\bigoplus_{i=1}^2\O_De_i\oplus\O_Df_i$ be the free $\O_D$-module of rank $4$, endow it with a $\z$-valued form by: 
$(\sum_{i=1}^2a_ie_i+b_if_i,\sum_{i=1}^2a_i^\prime e_i+b_i^\prime f_i)=\tr_{D/\q}(\sum_{i=1}^2a_i{\bar b}_i^\prime-b_i{\bar a}_i^\prime)$. Let $G_D$ 
be the group $\z$-scheme of $\O_D$-linear similitudes of this skew-Hermitian $\O_D$-module. Let $G_D^\circ$ be the connected component. A conjugacy 
class $X$ of maps from $\c^\times$ to $G_D^\circ(\r)$ is provided by making $h(a+\sqrt{-1}b)$ send $e_i$ to $ae_i-bf_i$ and $f_i$ to $be_i+af_i$. The 
reductive group $G_D\times\q$ is an inner form of the split $GO(8)$, the pair $(G_D^\circ\times\q,X)$ is a Shimura datum, one checks that $\dim X=6$.

\begin{thm}
Let $(G,Y)$ be a Shimura datum. Assume that $\dim Y=5$ or $\dim Y=6$. Assume also that there exists an imbedding $G\hookrightarrow G_D^\circ\times\q$ 
taking $Y$ to the conjugacy class $X$. Then ${_K}M(G,Y)$ has $(NVC)$ for any neat compact open subgroup $K\subset G(\ad)$.
\end{thm}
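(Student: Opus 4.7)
The plan is to reduce $(NVC)$ for $M={_K}M(G,Y)$ to fiber-dimension estimates for Hecke correspondences on the larger Shimura variety ${_H}M(G_D^\circ\times\q,X)$, which is of $PEL$-type since $G_D^\circ$ acts by $\O_D$-linear similitudes on a skew-Hermitian module. First I would apply Lemma \ref{subsh} to the embedding $(G,Y)\hookrightarrow(G_D^\circ\times\q,X)$ to fix a neat compact open $H\subset G_D^\circ(\ad)$ containing the image of $K$ and small enough to produce a closed immersion of canonical models over $\q$. Taking schematic closures in an integral $PEL$-model $\M^\bullet$ of ${_H}M(G_D^\circ\times\q,X)$ (built from the $\O_D$-abelian scheme moduli problem in the style of Kottwitz) then yields a closed immersion $\M\hookrightarrow\M^\bullet$ at all but finitely many primes. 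After enlarging the finite exceptional set $Q$, for $p\notin Q$ the stalks $\M_p$ and $\M^\bullet_p$ are smooth with dense ordinary locus, the latter by \cite[theorem 1.1]{bu2}, and the closure construction is compatible with forming mod-$p$ Hecke correspondences, so that for every $g\in G(\q_p)$ the closure $\Tbar_p(g)$ of $T(g)$ is set-theoretically contained in the restriction $\Tbar_p^\bullet(i(g))\cap(\Mbar_p\times\Mbar_p)$.

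The essential input comes from \cite{bu3}, where the fiber dimensions of the mod-$p$ Hecke correspondences on the $PEL$-variety attached to $G_D^\circ$ are computed. These dimensions are controlled by the Newton slopes; on the ordinary locus $\Mbar^{\bullet,\circ}_p$ the projection $p_1\colon\Tbar_p^\bullet(g')\to\Mbar_p^\bullet$ is finite by Lemma \ref{properfinite}(ii), and the results of \cite{bu3} bound the fibers away from the ordinary locus. I would extract from those bounds the inequality that every irreducible component of $\Tbar_p^\bullet(g')$ not arising as the specialization of a generic component has dimension strictly smaller than $\dim\Mbar_p^\bullet=6$, equivalently that $p_1$ and $p_2$ have no vertical components. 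This gives $(NVC)$ for the $PEL$-datum $(G_D^\circ\times\q,X)$ on the nose, which disposes immediately of the case $\dim Y=6$: there $M$ is a union of connected components of the $PEL$-variety and the property transfers verbatim.

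The remaining case $\dim Y=5$ requires a codimension-one dimension count. Any irreducible component $Z\subset\Tbar_p(g)$ that fails to project dominantly onto some component of $\Mbar_p$ must be contained in a component $Z^\bullet\subset\Tbar_p^\bullet(i(g))$. Either $Z^\bullet$ is horizontal, in which case $\dim Z^\bullet=6$ and $Z=Z^\bullet\cap(\Mbar_p\times\Mbar_p)$ has dimension at most $5=\dim\Mbar_p$, whence by comparison with the generic fibre of $T(g)$ (also of dimension $5$) the component $Z$ must in fact come from the generic fibre and hence project dominantly; or $Z^\bullet$ is a vertical component of $\Tbar_p^\bullet(i(g))$, and the \cite{bu3}-bound forces $\dim Z^\bullet<6$, so that $\dim Z\le\dim Z^\bullet-1<5$ (using that $\Mbar_p\times\Mbar_p$ is of codimension two in $\Mbar_p^\bullet\times\Mbar_p^\bullet$, combined with the fact that $\Mbar_p$ meets the ordinary locus of $\Mbar_p^\bullet$). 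Such a $Z$ is therefore too small to be an irreducible component of $\Tbar_p(g)$, completing the argument.

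The main obstacle I expect is precisely this last codimensional bookkeeping: extracting from the \cite{bu3} estimates a bound that survives restriction to the codimension-one subvariety $\M\hookrightarrow\M^\bullet$. One must check that $\M$ is in sufficiently general position relative to the non-ordinary stratification of $\M^\bullet_p$, so that intersecting a vertical component of the $PEL$-Hecke correspondence with $\Mbar_p\times\Mbar_p$ does indeed drop the dimension by the expected amount. This is where the density of $\Mbar_p\cap\Mbar^{\bullet,\circ}_p$ in $\Mbar_p$, established as part of the proof of Theorem \ref{mI}, will be crucial, since it rules out $\M_p$ being entirely swallowed by a Newton stratum on which the $PEL$-Hecke correspondence carries genuine vertical components.
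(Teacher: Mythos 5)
Your opening move is the same as the paper's: embed $M={_K}M(G,Y)$ into the $PEL$-variety $A={_H}M(G_D^\circ\times\q,X)$, take Zariski closures in an integral $PEL$-model, and invoke~\cite{bu3}. But from there the two arguments diverge, and your version has a genuine gap where you yourself flagged the ``main obstacle.''

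The paper does not argue via vertical components of the ambient $PEL$-Hecke correspondence and then intersect with the codimension-one subvariety $\Mbar_p\subset\Abar_p$. Instead it pulls two concrete facts out of~\cite[theorem 2.4, corollary 2.7]{bu3}: the supersingular locus $\Sbar_p\subset\Abar_p$ has dimension $2$, and the projection $p_1\colon\Tbar_p(i(g))\to\Abar_p$ has \emph{finite fibers over every point of} $\Abar_p-\Sbar_p$ (not merely over the ordinary locus). Now take any irreducible component $\Cbar$ of $\Tbar_p(g)$ with generic point $\eta$. If $p_1(\eta)\notin\Sbar_p$ one is done. If $p_1(\eta)\in\Sbar_p$ then, since supersingularity is an isogeny invariant, the whole component satisfies $\Cbar\subset\Sbar_p\times\Sbar_p$, which has dimension $4$; but flatness of $\T_p(g)$ forces $\dim\Cbar=5$ or $6$, a contradiction. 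This handles both $\dim Y=5$ and $\dim Y=6$ in one stroke, and requires no general-position or transversality input whatsoever.

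Your argument, by contrast, hinges on the estimate $\dim Z\le\dim Z^\bullet-1$ for $Z=Z^\bullet\cap(\Mbar_p\times\Mbar_p)$ when $Z^\bullet$ is a vertical component of $\Tbar_p^\bullet(i(g))$. Intersecting with a codimension-two locus gives no such upper bound: if $Z^\bullet$ happens to lie inside $\Mbar_p\times\Mbar_p$ the intersection does not drop dimension at all, and density of the ordinary locus in $\Mbar_p$ does not by itself prevent this, since $Z^\bullet$ is disjoint from the ordinary part anyway. Similarly, in your ``horizontal'' branch the inference ``$Z$ must come from the generic fibre and hence project dominantly'' does not follow from a dimension match: a $5$-dimensional component of the special fibre of $\T_p(g)$ can be entirely vertical. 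The paper's use of the supersingular locus and isogeny-invariance is precisely what sidesteps all of this bookkeeping, and is the idea you would need to import to close the gap.
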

\begin{proof}
If $K$ and $i$ are given one finds a level structure $H\subset G_D^\circ(\ad)$ giving rise to a closed immersion 
$M={_K}M(G,Y)\hookrightarrow A={_H}M(G_D^\circ\times\q,X)$ Let $K=K_Q\times\prod_{p\notin Q}K_p$, and $H=H_Q\times\prod_{p\notin Q}G_D^\circ(\z_p)$, 
be factorizations with $Q$ a finite set such that $K_p$ and $G_D^\circ(\z_p)$ are hyperspecial for $p\notin Q$. The canonical model $A$ has a 
moduli interpretation of PEL type, see ~\cite[chapter 5]{kottwitz} for details, it gives rise to an integral model $\A$. Let $\M$ be the Zariski closure of 
$M$ in $\A$. We set $\Sbar_p$ for the supersingular locus of $\Abar_p$, for some $p\notin Q$. To an element $g\in G(\q_p)$ we consider the diagram
$$\begin{CD}
\T_p(i(g))@>p_1>>\A_p\\
@AAA@AAA\\
\T_p(g)@>p_1>>\M_p
\end{CD}$$
as we did in the proof of lemma ~\ref{properfinite}. Using methods of Katsura and Oort one can check that
\begin{itemize}
\item
$\dim\Sbar_p=2$,
\item
over any point $\eta\in\Abar_p-\Sbar_p$ the fibers of the projection map $p_1$ are finite,
\end{itemize}
see ~\cite[theorem 2.4]{bu3} and ~\cite[corollary 2.7]{bu3} for full details. We can now show that $p:\Tbar_p(g)\rightarrow\Mbar_p$ is generically finite. 
Let $\Cbar$ be an irreducible component of $\Tbar_p(g)$. Let $\eta$ be the generic point of $\Cbar$. If $p_1$ maps $\eta$ to a point not in $\Sbar_p$ we 
are done as the fibers over $\Abar_p-\Sbar_p$ are finite. If $p_1$ does map $\eta$ into $\Sbar_p$, one must have $\Cbar\subset\Sbar_p\times\Sbar_p$, as 
supersingularity is an isogeny invariant. By flatness of $\T_p(g)$ we have $\dim\Cbar=5$ or $\dim\Cbar=6$, contradicting $\dim\Sbar_p\times\Sbar_p=4$.
\end{proof}

\begin{rem}
Using Clifford algebras in the way Satake does in ~\cite{satake2}, one can indeed show that $5$-dimensional Shimura data $(G,Y)$ as in the theorem exist. Such a $G$ 
is the spinor similitude group of a certain $7$-dimensional quadratic $\q$-space of signature $(2,5)$, moreover, by an easy application of Meyer's theorem the spinor 
similitude group of every $\q$-form of the quadratic $\r$-space of signature $(2,5)$ can be imbedded into $G_D$, for some $D$ that depends on the quadratic $\q$-space.
\end{rem}

\begin{appendix}
\section{The algebra of correspondences}
\label{algebra}
This appendix is devoted to some elementary properties of the algebra of correspondences. Recall some concepts from the theory of algebraic cycles: To every 
algebraic variety $X$ over a field one can attach abelian groups $Z_n(X)$ and $A_n(X)$. The group of $n$-cycles $Z_n(X)$ is defined as the free abelian group 
generated by all irreducible $n$-dimensional subvarieties of $X$. The divisors of functions on all $n+1$-dimensional subvarieties generate the subgroup of 
cycles that are rationally equivalent to $0$. The quotient of $Z_n(X)$ by this subgroup is $A_n(X)$. In ~\cite[chapter 20]{fulton} it is explained how to extend 
these concepts to any noetherian universally catenary scheme $\X$. A cycle is still a formal sum of closed irreducible subschemes, but the Krull dimension of 
the summands is not useful to define the dimension of the cycle. One rather fixes a morphism $p:\X\rightarrow S$ to an auxiliary base scheme $S$ and defines 
$Z_n(\X/S)$, the group of $n$-cycles of $\X$ relative to $S$, to be the free abelian group generated by all closed irreducible $\C\hookrightarrow\X$ which satisfy
$$\trdeg(k(\eta)/k(p(\eta)))-\codim_Sp(\eta)=n$$
if $\eta$ is the generic point of $\C$. The expression on the left is called the relative dimension of $\C$ and denoted $\dim_S\C$, it may very 
well be a negative number, finally we denote the support of some $\C\in Z_n(\X/S)$ by $|\C|\subset\X$. In the following we will stick to the special 
case $S=\Spec R$, with $R$ either a discrete valuation domain or a field. Again by ~\cite[chapter 20]{fulton} one knows that most of the basic 
facts of intersection theory, for example those proved in ~\cite[chapters 1-6]{fulton} remain valid in this context. This means in particular that:
\begin{itemize}
\item[(I)]
if all irreducible components of $\X$ have the same relative dimension $n$ over $S$ then $\X$ has a fundamental cycle $[\X]\in Z_n(\X/S)$
\item[(II)]
there is a notion of rational equivalence of cycles of relative dimension $n$ over $S$, which is used to define the quotient $A_n(\X/S)$ of $Z_n(\X/S)$, 
and if no irreducible component of $\X$ has relative dimension strictly larger than $n$, the canonical map from $Z_n(\X/S)$ to $A_n(\X/S)$ is a bijection.
\item[(III)]
for every proper morphism $f:\X\rightarrow\Y$ of schemes over $S$, there is a proper push-forward 
$f_*:Z_n(\X/S)\rightarrow Z_n(\Y/S)$ which passes to rational equivalence to define a map $f_*:A_n(\X/S)\rightarrow A_n(\Y/S)$
\item[(IV)]
one can define an exterior product $\times_S:Z_n(\X/S)\times Z_m(\Y/S)\rightarrow Z_{n+m}(\X\times_S\Y)$ which also passes to a product of cycles up to rational equivalence
\item[(V)]
for every regular embedding $i:\X\hookrightarrow\Y$ of schemes over $S$ there is a refined Gysin map 
$i^!:A_n(\Y'/S)\rightarrow A_{n-d}(\X'/S)$, with $\Y'$ any scheme over $\Y$, $\X'=\X\times_\Y\Y'$ and $d$ the codimension of the embedding $i$
\item[(VI)]
for any open subscheme $U\subset\X$ one can restrict cycles from $\X$ to $U$ in order to obtain maps $|_U:Z_n(\X/S)\rightarrow Z_n(U/S)$ 
and $|_U:A_n(\X/S)\rightarrow A_n(U/S)$, these are special cases of flat pull-back maps (which we will not use).
\end{itemize}

It will be handy to have a special notation for certain sets of correspondences:

\begin{defn}
\label{ccf}
Let $X$ and $Y$ be smooth $d$-dimensional algebraic varieties over $k$.
\begin{itemize}
\item
A correspondence $C$ from $X$ to $Y$ is a $d$-cycle
$$C=\sum_in_i[V_i]$$
in $X\times_kY$ which is supported on some closed $d$-dimensional subvariety $|C|\subset X\times_kY$, such that both projection maps 
$p_X:|C|\rightarrow X$ and $p_Y:|C|\rightarrow Y$ are proper. We write $C\sim0$ if $|C|$ is contained in some closed subvariety $S\subset X\times_kY$, 
such that both projection maps $p_X:S\rightarrow X$ and $p_Y:S\rightarrow Y$ are proper, and $C$ vanishes in $A_d(S)$. Let $C(X,Y)$ 
be the abelian group of all correspondences from $X$ to $Y$, and let $C_{rat}(X,Y)$ be its quotient by the
equivalence relation $\sim$. 
\item
An element $C\in C(X,Y)$ is called a finite correspondence if the projection maps $p_X:|C|\rightarrow X$ and 
$p_Y:|C|\rightarrow Y$ are finite. Let $C_{fin}(X,Y)$ denote the subgroup of finite correspondences from $X$ to $Y$.
\item
Let $R$ be a discrete valuation ring and let $\X$ and $\Y$ be schemes over $R$ which are smooth of relative dimension $d$, and let $X$ and $Y$ be 
their generic fibers. Consider a cycle $C\in Z_d(X\times_kY)$ and let $|\C|$ be the Zariski closure of $|C|$ in $\X\times_R\Y$. We call $C$ a correspondence 
from $\X$ to $\Y$ if the projection maps from $|\C|$ to the factors $\X$ and $\Y$ are proper. Let $C(\X,\Y)$ denote the abelian group of all correspondences 
from $\X$ to $\Y$. Furthermore, we let $C_{fin}(\X,\Y)$ be the subgroup of correspondences the closure of whose supports are finite over both $\X$ and $\Y$, 
while $C_{rat}(\X,\Y)$ denotes the quotient of $C(\X,\Y)$ by the subgroup of $C$ that vanish in some $A_d(\S/R)$ for some closed $\S\subset\X\times_R\Y$ which is 
proper over $\X$ and $\Y$, and contains $|\C|$.
\end{itemize}
\end{defn}

Note the natural maps $C_{fin}(X,Y)\hookrightarrow C(X,Y)\twoheadrightarrow C_{rat}(X,Y)$ and 
$C_{fin}(\X,\Y)\hookrightarrow C(\X,\Y)\twoheadrightarrow C_{rat}(\X,\Y)$. We sketch how elements in $C_{rat}$ can be multiplied: Consider smooth $d$-dimensional algebraic 
varieties $X$, $Y$ and $Z$ over $k$, and reduced closed subvarieties $S\hookrightarrow X\times_kY$ and $T\hookrightarrow Y\times_kZ$. If each of $S$ and $T$ is 
proper over the factors, than so is the natural map $p:S\times_YT\rightarrow X\times_kZ$, in which case we let $S*T$ be its (mere set-theoretic) image. Observe that 
$S*T$ is a Zariski closed subset of $X\times_kZ$, of which the reduced induced subscheme structure is proper over $X$ and $Z$. Given any $C\in A_d(S)$ and 
$D\in A_d(T)$, we consider the Cartesian square
$$\begin{CD}
S\times_YT@>>>X\times_kY\times_kZ\\
@VVV@ViVV\\
S\times_kT@>>>X\times_kY\times_kY\times_kZ
\end{CD}$$
with associated refined Gysin map (cf. ~\cite[chapter 6]{fulton})
$$i^!:A_{2d}(S\times_kT)\rightarrow A_d(S\times_YT),$$
and push forward:
$$p_*:A_d(S\times_YT)\rightarrow A_d(S*T)$$
We let the product $C.D$ be the element of $C_{rat}(X,Z)$ which is defined by: 
$$p_*(i^!(C\times D))\in A_d(S*T),$$ 
and similarly we obtain the bilinear map $$C_{rat}(\X,\Y)\times C_{rat}(\Y,\Z)\rightarrow C_{rat}(\X,\Z)$$ for smooth $R$-schemes $\X$, $\Y$ and $\Z$.

\begin{rem}
\label{blablabla}
Consider correspondences $C\in C(X,Y)$ and $D\in C(Y,Z)$. If at least one of them is finite, then the above procedure can be used to 
define a product $C.D\in C(X,Z)$. This is due to $\dim(|C|*|D|)\leq d$ in this case, so that $A_d(|C|*|D|)=Z_d(|C|*|D|)$. Similarly finite 
correspondences $C\in C_{fin}(X,Y)$, and $D\in C_{fin}(Y,Z)$ give rise to a product $C.D\in C_{fin}(X,Z)$, as $|C.D|\subset |C|*|D|$.
\end{rem}

Let us check the associativity:

\begin{lem}
\label{Assoziativ}
Let $X$, $Y$, $Z$ and $W$ be smooth $d$-dimensional varieties over a field $k$. Let 
$C\in C_{rat}(X,Y)$, $D\in C_{rat}(Y,Z)$ and $E\in C_{rat}(Z,W)$. Then $(C.D).E=C.(D.E)$
\end{lem}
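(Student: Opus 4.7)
The natural strategy is to exhibit both triple products $(C.D).E$ and $C.(D.E)$ as two different ways of evaluating a single symmetric construction on the sextuple product $\Pi=X\times_kY\times_kY\times_kZ\times_kZ\times_kW$. First I would form the exterior product $C\times D\times E$, a $3d$-cycle supported on $|C|\times_k|D|\times_k|E|$ (after reordering factors) inside $\Pi$. Then there are two disjoint regular closed embeddings into $\Pi$, namely the diagonal $\Delta_Y\subset Y\times Y$ (of codimension $d$) and $\Delta_Z\subset Z\times Z$ (also of codimension $d$), yielding refined Gysin maps $\Delta_Y^!$ and $\Delta_Z^!$ that drop relative dimension by $d$ each. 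Applying both, in either order, gives a well-defined class on (the pullback of the supports to) $X\times_kY\times_kZ\times_kW$.

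The central input I would invoke is the commutativity of refined Gysin maps for two disjoint regular embeddings, which is a standard consequence of \cite[Theorem 6.4]{fulton} applied to the Cartesian squares cutting out the intersection $\Delta_Y\cap\Delta_Z$: one has $\Delta_Y^!\Delta_Z^!=\Delta_Z^!\Delta_Y^!$ on $A_*$ of the relevant ambient space. Next, the compatibility of proper push-forward with refined Gysin maps (Fulton, Theorem 6.2(a)) combined with the functoriality of $\times_k$ under push-forward lets me identify $\Delta_Y^!(C\times D\times E)$ with $(C.D)\times E$ up to the obvious reordering, and likewise $\Delta_Z^!(C\times D\times E)$ with $C\times(D.E)$; after the second Gysin pull-back and push-forward to $X\times_k W$ one recovers $(C.D).E$ and $C.(D.E)$ respectively.

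Concretely, let $p:|C|\times_Y|D|\times_Z|E|\to X\times_kW$ be the proper map (properness follows from properness of the projections of the $|C|$, $|D|$, $|E|$ onto their factors) and let $\Sigma\subset X\times_kW$ be its image. Both triples of data produce classes in $A_d(\Sigma)$, and by the commutativity of refined Gysin maps these classes coincide. Pushing forward to $X\times_kW$ then yields the identity $(C.D).E=C.(D.E)$ in $C_{rat}(X,W)$.

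The main obstacle is the bookkeeping for supports and well-definedness of the intermediate classes. One has to check that the closed sets $(|C|*|D|)*|E|$, $|C|*(|D|*|E|)$ and $\Sigma$ are all contained in a single closed subvariety of $X\times_kW$ that is proper over both factors, so that the resulting classes genuinely define the same element of $C_{rat}(X,W)$ independently of the order in which one composes; once this containment is noted, the commutativity of Gysin maps does all the work. The analogous statement for smooth $R$-schemes $\X,\Y,\Z,\W$ follows by precisely the same argument, using the relative intersection theory of \cite[chapter 20]{fulton} in place of the absolute one.
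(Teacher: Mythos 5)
Your proposal is correct and follows essentially the same approach as the paper: both arguments anchor the two triple products to the single class $j^!(C\times D\times E)$ obtained by pulling $C\times D\times E$ back along the regular embedding $j:X\times_kY\times_kZ\times_kW\hookrightarrow X\times_kY\times_kY\times_kZ\times_kZ\times_kW$, and both rely on Fulton's compatibility of refined Gysin maps with proper push-forward (Theorem 6.2(a)) together with the fact that $j^!$ can be computed by applying the two diagonal Gysin maps in either order. The only cosmetic difference is that you cite the commutativity statement (Theorem 6.4) while the paper derives the same fact by factoring $j$ two ways (functoriality, Theorem 6.5); for diagonals in disjoint sets of factors these are interchangeable.
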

\begin{proof}
By slight abuse of notation we denote suitable representatives of $C$, $D$ and $E$ in $C(X,Y)$, $C(Y,Z)$ and $C(Z,W)$ by the same 
letter, and we write $|C|$, $|D|$ and $|E|$ for their supports. Let $B\in A_d(|C|\times_Y|D|)$ be the Gysin pull back of $C\times D$ along 
$X\times_kY\times_kZ\rightarrow X\times_kY\times_kY\times_kZ$. Consider the following commutative diagram in which the squares are Cartesian.
$$\begin{CD}
|C|\times_Y|D|\times_Z|E|@>>>|C|\times_Y|D|\times_k|E|\\
@VqVV@VpVV\\
(|C|*|D|)\times_Z|E|@>>>(|C|*|D|)\times_k|E|\\
@VVV@VVV\\
X\times_kZ\times_kW@>i>>X\times_kZ\times_kZ\times_kW\\
@VVV\\
X\times_kW
\end{CD}$$
Put $A=i^!(B\times E)\in A_d(|C|\times_Y|D|\times_Z|E|)$. By ~\cite[theorem 6.2.(a)]{fulton} one has $q_*A=i^!p_*(B\times E)$. This shows 
that $(C.D).E=p_{XW*}A$. Here $p_{XW}$ is the obvious map from $|C|\times_Y|D|\times_Z|E|$ to $|C|*|D|*|E|\subset X\times_kW$. On the 
other hand, if $j$ denotes the map from $X\times_kY\times_kZ\times_kW$ to $X\times_kY\times_kY\times_kZ\times_kZ\times_kW$ one gets from the diagram
$$\begin{CD}
|C|\times_Y|D|\times_Z|E|@>>>X\times_kY\times_kZ\times_kW\\
@VVV@VVV\\
|C|\times_Y|D|\times_k|E|@>>>X\times_kY\times_kZ\times_kZ\times_kW\\
@VVV@VVV\\
|C|\times_k|D|\times_k|E|@>>>X\times_kY\times_kY\times_kZ\times_kZ\times_kW
\end{CD}$$
that $A=j^!(C\times D\times E)$. A variant of the above argument shows that one has also $C.(D.E)=p_{XW*}j^!(C\times D\times E)$ 
which proves the lemma.
\end{proof}

\begin{defn}
Let $X$ and $Y$ be smooth algebraic varieties over $k$ of dimension $d$. Let $P\in Z_n(X)$, and $C\in C_{fin}(X,Y)$. 
Let $|P|\subset X$, and $|C|\subset X\times_kY$ be their supports. Consider the Cartesian square
$$\begin{CD}
|P|\times_X|C|@>>>X\times_kY\\
@VVV@ViVV\\
|P|\times_k|C|@>>>X\times_kX\times_kY
\end{CD}$$
with associated refined Gysin map:
$$i^!:A_{n+d}(|P|\times_k|C|)\rightarrow A_n(|P|\times_X|C|)=Z_n(|P|\times_X|C|),$$
here the equation $A_n(\dots)=Z_n(\dots)$ is due to $\dim(|P|\times_X|C|)=n$ by finiteness of $C$. Consider further the natural map 
$p:|P|\times_X|C|\rightarrow Y$, it is finite as $|C|$ is finite over $Y$. Let $|P|*|C|\subset Y$ be the image of $p$, with associated push forward:
$$p_*:Z_n(|P|\times_X|C|)\rightarrow Z_n(|P|*|C|)$$
Then the element $P.C=p_*(i^!(P\times C))\in Z_n(|P|*|C|)$ constitutes an element in $Z_n(Y)$ which is called the product of $P$ with $C$.
\end{defn}

Finite correspondences act on $n$-cycles: If $P$ is a $n$-cycle of $X$ and $C\in C_{fin}(X,Y)$ and $D\in C_{fin}(Y,Z)$, then one has $(P.C).D=P.(C.D)$. 
Occasionally it is useful to consider cycles which are only defined over some extension of the ground field: Let $l/k$ be a finite field extension and define a base change map
$$\times_kl:Z_n(X)\hookrightarrow Z_n(X\times_kl)$$
by sending a cycle $C$ to the exterior product $C\times[\Spec l]$. It is a fact that this operation is compatible with the constructions in this subsection i.e. $\times_kl$ 
maps $C(X,Y)$ into $C(X\times_kl,Y\times_kl)$ and commutes with products and preserves the subgroups of correspondences that are finite or rationally equivalent to 
$0$. If one fixes an algebraic closure $k^{algcl}$ of $k$, then one can carry this over to the limit over all finite extensions to thus arrive at an inclusion 
$Z_n(X)\hookrightarrow Z_n(X\times_kk^{algcl})$ and similarly $C(X,Y)\hookrightarrow C(X\times_kk^{algcl},Y\times_kk^{algcl})$. On these groups there is an action 
of the automorphism group of $k^{algcl}/k$.\\
Let $X$ be a variety defined over $\f_q$, and assume it is equidimensional of dimension $d$. We define a particular 
$d$-cycle $\Gamma_q\in Z_d(X\times_{\f_q}X)$ as the graph of the geometric Frobenius map $x\mapsto x^q$. Indeed one 
has $\Gamma_q\in C_{fin}(X,X)$ because the Frobenius morphism is a finite map. The proof of the following lemma is clear:

\begin{lem}
\label{frob}
Let $X$ be a $d$-dimensional variety over $\f_q$. Let $\gamma_q\in\Gal(\f_q^{algcl}/\f_q)$ be the 
arithmetic Frobenius automorphism. Then one has for all $x\in Z_0(X\times_{\f_q}\f_q^{algcl})$:
$$x.\Gamma_q=x^{\gamma_q}$$
\end{lem}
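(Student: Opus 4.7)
My plan is to reduce to the case of a single closed point by $\z$-linearity and then unwind the definition of the correspondence product. Any $x\in Z_0(X\times_{\f_q}\f_q^{algcl})$ is a $\z$-linear combination of cycles of closed points coming from morphisms $\bar x:\Spec\f_q^{algcl}\to X$, and both sides of the claimed identity are $\z$-linear in $x$; so it suffices to treat a single such $\bar x$.

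Next I would compute $\bar x.\Gamma_q$ directly from its definition. The key observation is that $\Gamma_q$ is the graph of the $q$-power Frobenius $F_q:X\to X$, so the first projection $p_1:|\Gamma_q|\to X$ is an isomorphism. Hence the Cartesian square
$$\begin{CD}
|\bar x|\times_X|\Gamma_q|@>>>X\times_{\f_q}X\\
@VVV@ViVV\\
|\bar x|\times_{\f_q}|\Gamma_q|@>>>X\times_{\f_q}X\times_{\f_q}X
\end{CD}$$
(base changed to $\f_q^{algcl}$) has its top-left corner canonically identified with $\Spec\f_q^{algcl}$ via $\bar x$, with no higher-dimensional excess. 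Consequently $i^{!}(\bar x\times\Gamma_q)$ is just the class of the point $(\bar x,F_q\circ\bar x)$ with multiplicity $1$, and pushing forward by the second projection to $X\times_{\f_q}\f_q^{algcl}$ produces the cycle of the geometric point $F_q\circ\bar x$.

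Finally I would identify $F_q\circ\bar x$ with $\bar x^{\gamma_q}$. This is the familiar dictionary between geometric and arithmetic Frobenius for $\f_q$-schemes: working locally with $X=\Spec R$ for an $\f_q$-algebra $R$, the morphism $\bar x$ corresponds to a ring homomorphism $\phi:R\to\f_q^{algcl}$, while $F_q$ corresponds to $r\mapsto r^q$ on $R$, so $F_q\circ\bar x$ corresponds to $r\mapsto\phi(r^q)=\phi(r)^q=\gamma_q(\phi(r))$, i.e.\ to $\gamma_q\circ\phi$, which is by definition $\bar x^{\gamma_q}$. The only step that is not formal bookkeeping is the Gysin computation, and that collapses to nothing because $p_1:\Gamma_q\to X$ is an isomorphism; in other words, there is never a genuine intersection-multiplicity issue to analyze, which is why the lemma is essentially a matter of matching definitions.
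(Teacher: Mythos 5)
The paper does not actually supply a proof of this lemma --- it merely remarks ``The proof of the following lemma is clear'' --- so there is no argument to compare against, but your unwinding is the standard one and it is correct. You reduce to a single geometric point by linearity, observe that the first projection $p_1\colon|\Gamma_q|\to X$ is a scheme-theoretic isomorphism (not merely a bijection on points), so that the fiber product $|\bar x|\times_X|\Gamma_q|$ in the definition of the cycle product is canonically $\Spec\f_q^{algcl}$ and the refined Gysin class has multiplicity exactly $1$, and then push forward to the second factor to obtain $F_q\circ\bar x$; the final identification $F_q\circ\bar x=\bar x^{\gamma_q}$ is the usual geometric-versus-arithmetic Frobenius dictionary. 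One small point worth making explicit: the reason the multiplicity is $1$ is not merely that there is no excess dimension, but that the local equations cutting the diagonal out of $X\times_kX\times_kY$ pull back, along the parametrization $a\mapsto(\bar x,a,a^q)$ of $|\bar x|\times_k|\Gamma_q|$, to a regular sequence of local parameters at $\bar x$ --- precisely because $p_1$ restricted to the graph is an isomorphism --- so the local ring of the intersection has length one. Your phrasing ``there is never a genuine intersection-multiplicity issue'' is a shade terse on this count (the Frobenius graph is very far from transverse to the vertical fibers in the $Y$-direction), but the isomorphism of $p_1|_{\Gamma_q}$, which you do invoke, is exactly what saves the day, so the argument stands.
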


Let us us say that some $C\in C(X,Y)$ is generically finite if both projection maps $p_X:|C|\rightarrow X$ and $p_Y:|C|\rightarrow Y$ have that property.

\begin{lem}
\label{equality}
Let $X$ and $Y$ be smooth $d$-dimensional varieties over $k$. Assume that $k$ is algebraically closed, and let 
$C$ be a generically finite correspondence from $X$ to $Y$. Assume that there are given open dense 
subvarieties $X^\circ\subset X$ and $Y^\circ\subset Y$, such that the restriction $C^\circ=C|_{X^\circ\times_kY^\circ}$ 
is an element of $C_{fin}(X^\circ,Y^\circ)$. Assume that $x.C^\circ$ vanishes for all closed points $x\in X^\circ$. Then $C$ vanishes.
\end{lem}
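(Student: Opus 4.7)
The plan is to argue by contradiction. Write $C = \sum_i n_i[V_i]$ with the $V_i$ distinct irreducible $d$-dimensional subvarieties of $X\times_kY$ and all $n_i \neq 0$; the goal is to show this sum is empty. After reducing to the case that $X$ and $Y$ are irreducible (working component by component), I first observe that generic finiteness of $C$ combined with $\dim V_i = d = \dim X = \dim Y$ forces both projections $p_X\colon V_i \to X$ and $p_Y\colon V_i \to Y$ to be dominant. Consequently $V_i^\circ := V_i \cap (X^\circ \times_k Y^\circ)$ is a dense open subset of $V_i$, and the hypothesis $C^\circ \in C_{fin}(X^\circ, Y^\circ)$ upgrades $p_X|_{V_i^\circ}$ to a finite surjection onto $X^\circ$ (its image is dense and closed).

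Fix an index, say $i=1$; I shall derive the contradiction $n_1=0$. By generic flatness applied to the finitely many finite morphisms $V_i^\circ \to X^\circ$, there is a dense open $U_0 \subset X^\circ$ over which each $V_i^\circ \to X^\circ$ is flat, of some degree $d_i \geq 1$. For every $i \neq 1$, the intersection $V_1 \cap V_i$ is a proper closed subvariety of $V_1$, hence of dimension strictly smaller than $d$, so $\overline{p_X(V_1 \cap V_i)}$ is a proper closed subset of $X$. Let $U = U_0 \setminus \bigcup_{i \neq 1} \overline{p_X(V_1 \cap V_i)}$; this is a dense open in $X^\circ$, and I pick any closed point $x \in U$, available since $k$ is algebraically closed.

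The key observation is then that $x.[V_1^\circ]$ is a nonzero $0$-cycle on $Y^\circ$ whose support is disjoint from the support of $x.[V_i^\circ]$ for every $i \neq 1$. The nonvanishing follows from flatness of $p_X|_{V_1^\circ}$ at $x$: by the refined Gysin formalism of Definition \ref{ccf}, the cycle $x.[V_i^\circ]$ is the pushforward to $Y^\circ$ of the scheme-theoretic fibre $V_i^\circ \times_{X^\circ} \{x\}$, a finite $k$-scheme of total length $d_i > 0$. Disjointness is exactly the content of the exclusion $x \notin \overline{p_X(V_1 \cap V_i)}$: a common point $y$ in the two supports would yield $(x,y) \in V_1 \cap V_i$ and hence $x \in p_X(V_1 \cap V_i)$, a contradiction.

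Combining these, the hypothesis $x.C^\circ = \sum_i n_i\, x.[V_i^\circ] = 0$, when read coefficient-wise at any closed point $y$ in the support of $x.[V_1^\circ]$, leaves only the contribution from $i=1$, so $n_1 \cdot \mathrm{length}_y(V_1^\circ \times_{X^\circ} \{x\}) = 0$, forcing $n_1 = 0$ and contradicting our assumption. The main obstacle, and the step where smoothness of $X^\circ$, finiteness of $p_X|_{V^\circ}$, and generic flatness all enter essentially, is the compatibility of the refined Gysin product $x.[V^\circ]$ of Definition \ref{ccf} with the naive scheme-theoretic fibre over $x$; this is the standard identity expressing Gysin pullback along a regular embedding as flat pullback whenever the morphism being intersected is itself flat, after which the entire argument reduces to the combinatorics of supports sketched above.
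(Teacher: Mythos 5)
Your argument is correct and takes essentially the same route as the paper's: reduce to a generic closed point $x$ lying outside the projection of the pairwise intersections $V_1\cap V_i$, then conclude from the disjointness of supports that the $n_1$-contribution cannot cancel. The only difference is one of presentation: the paper first reduces to $X^\circ=X$, $Y^\circ=Y$ (using that $p_X\colon|C|\to X$ is then proper, so $p_X(V_1\cap\bigcup_{i\neq 1}V_i)$ is already closed) and simply asserts $x.[V_1]\neq 0$, whereas you stay inside $X^\circ\times Y^\circ$ and invoke generic flatness to identify $x.[V_1^\circ]$ with the fundamental cycle of the scheme-theoretic fibre, making the nonvanishing explicit.
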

\begin{proof}
It is clear that $C$ vanishes if and only if $C^\circ$ does, so we may assume without loss of generality that $X^\circ=X$ and $Y^\circ=Y$. 
Assume $C$ was not zero and write it as $C=\sum n_i[V_i]$ with nonzero $n_i$ and mutually different $V_i$. Write $p_X$ for the projection 
onto the factor $X$ and consider the set $F=p_X(V_1\cap(\bigcup_{i\neq1}V_i))$. It is a Zariski closed subset of $X$ of dimension strictly 
smaller than $d$. We may pick a closed point $x\in X-F$, which leads to $x.C=n_1x.[V_1]+\sum_{i\neq1}n_ix.[V_i]$. The supports of the 
$0$-cycles $x.[V_1]$ and $\sum_{i\neq1}n_ix.[V_i]$ are disjoint, but we certainly have $x.[V_1]\neq0$. The contradiction $x.C\neq0$ follows.
\end{proof}

\section{Specialization of cycles}
\label{speci}

So far we have only considered varieties over fields, now let $\X$ be a scheme over a discrete valuation ring $R$ with generic fiber $X$ and special fiber $\Xbar$. We need 
to specialize cycles in $X$ to cycles in $\Xbar$. If $\X$ is flat over $R$ the special fiber becomes a Cartier divisor of $\X$ and we want to think of the specialization process 
as a special case of the operation to intersect a cycle with a Cartier divisor. In the equal characteristic case this operation is introduced in ~\cite[definition/remark 2.3]{fulton}, 
but the supplements in ~\cite[chapter 20]{fulton} show again that it also exists in the slightly more general setting of a scheme $\X$ over an arbitrary -possibly mixed 
characteristic- discrete valuation ring. We let $D$ be a Cartier divisor of $\X$ write $|D|$ for its support and let $U$ be the complement of $|D|$ in $\X$. Define a map
$$Z_n(U/R)\rightarrow Z_{n-1}(|D|/R);\alpha\mapsto D.\alpha$$
by the following considerations: $D$ determines a line bundle $\O(D)$. On $U$ this line bundle is canonically trivialized i.e. we have a on $U$ nowhere 
vanishing section $s\in\Gamma(U,\O(D))$. Without loss of generality we may assume that our $n$-cycle $\alpha$ is of the form $[C]$ with $C\subset U$ 
being closed irreducible of relative dimension $n$ over $R$. Let $\C$ be the Zariski closure of $C$ in $\X$. By restricting $\O(D)$ and $s$ one obtains a line 
bundle $\L$ on $\C$ which is trivialized on the set $U\cap\C$. The pair $\L$ together with its given trivialization over $U\cap\C$ constitutes a so-called pseudo-divisor. 
It therefore has an associated Weil divisor (cf. ~\cite[lemma 2.2(a)]{fulton}) which one can write as a sum $W=\sum n_i[W_i]$, in which every $W_i$ is of codimension 
one in $\C$, hence of relative dimension $n-1$ over $R$. One defines $D.\alpha$ to be $W$. In the special case $D=\Xbar$ one just has to note that $U$ 
is $X$, that both $X$ and $\Xbar$ are varieties defined over fields, namely the fraction and the residue field of $R$ and that one has natural equalities
$$Z_n(X/R)=Z_n(X)$$
and
$$Z_{n-1}(\Xbar/R)=Z_n(\Xbar)$$
so that the intersection with $\Xbar$ defines a map
$$\sigma:Z_n(X)\rightarrow Z_n(\Xbar)$$
which is called the specialization map. Note that in this special case the occurring divisors and pseudo-divisors are principal, namely the ones defined by a 
uniformizer of $R$, say $\pi$. Thus the computation of $\sigma(C)$ boils down to taking the Weil divisor of $\pi$ on $\C$ which is the same as the fundamental 
cycle of the closed subscheme which it defines, i.e. we have that $\sigma(C)=W=[\Cbar]$, if $\Cbar=\C\times_RR/\pi$ denotes the special fiber of $\C$.\\
We will need to check the compatibility of $\sigma$ with the various product operations defined in the last subsection. For doing this yet another description of 
$\sigma$, namely one in terms of the refined Gysin map will prove to be useful. Consider a cycle $C\in Z_n(X)$, let $\C$ be the same cycle regarded as an element 
in $Z_n(\X/R)$. Let $|C|\subset X$ be the support of $C$, let $|\C|$ be the support of $\C$, $|\C|$ is the Zariski closure of $|C|$ in $\X$, let finally $|\Cbar|$ be the 
special fiber of $|\C|$. Corresponding to the regular embedding $i:\Spec k\hookrightarrow\Spec R$, which has codimension one, there is a refined Gysin map
$$i^!:A_n(|\C|/R)\rightarrow A_{n-1}(|\Cbar|/R).$$
However, because $|\C|$, and $|\Cbar|$ have relative dimensions $n$ and $n-1$ over $R$, we have $A_n(|\C|/R)=Z_n(|\C|/R)$ and
$A_{n-1}(|\Cbar|/R)=Z_{n-1}(|\Cbar|/R)$, so that $i^!(\C)$ is a honest relative $n-1$-cycle in $|\Cbar|$ over $R$, now use the obvious
inclusions
$$Z_n(|\C|/R)=Z_n(|C|)\subset Z_n(X)$$
$$Z_{n-1}(|\Cbar|/R)=Z_n(|\Cbar|)\subset Z_n(\Xbar)$$
to recover $\sigma(C)$ from $i^!(\C)$. Note also that the specialization map commutes with base change to a bigger discrete valuation 
ring $R'$. One could deduce this from ~\cite[proposition 2.3(d)]{fulton}, but let us argue directly: Consider $\X'=\X\times_RR'$ and 
write $X'$ and $\Xbar'$ for the generic and special fiber of $\X'$. We want to settle the commutativity of the diagram
$$\begin{CD}
Z_n(X)@>\sigma>>Z_n(\Xbar)\\
@V\times_KK'VV@V\times_kk'VV\\
Z_n(X')@>\sigma>>Z_n(\Xbar')
\end{CD}$$
in which $K'/K$ and $k'/k$ denote the fraction and residue field extensions corresponding to $R'/R$. The fundamental cycle of any closed irreducible $C\subset X$ with Zariski 
closure $\C\subset\X$ specializes to $[\C\times_Rk]$ and base changes to $[C\times_KK']$. It is then evident that $\C\times_RR'$ is the Zariski closure of $C\times_KK'$ so that this cycle specializes to $[(\C\times_RR')\times_{R'}k']=[\C\times_Rk']=\sigma([C])\times_kk'$. We will frequently consider $K'=K^{algcl}$ and want to choose a valuation ring 
$R'$ of $K'$ which dominates $R$. As $R'$ may not be discrete one cannot apply the above recipe directly, but observe that every element of $Z_n(X\times_KK^{algcl})$ 
is actually defined over some finite extension of $K$ which will cut out a subring of $R'$ which is a discrete valuation ring. This consideration sets up a map
$$\sigma:Z_n(X\times_KK^{algcl})\rightarrow Z_n(\Xbar\times_kk^{algcl})$$
which we will still call the specialization map. Here $k^{algcl}$ is the residue field of $R'$.\\

At last notice, that we also may specialize correspondences between smooth $d$-dimensional 
$R$-schemes $\X$ and $\Y$ to their respective special fibers $\Xbar$ and $\Ybar$, to yield linear maps:
$$\sigma_{fin}:C_{fin}(\X,\Y)\rightarrow C_{fin}(\Xbar,\Ybar)$$
and
$$\sigma_{rat}:C_{rat}(\X,\Y)\rightarrow C_{rat}(\Xbar,\Ybar),$$
of which we want to check the multiplicativity:

\begin{lem}
\label{evaluate}
Let $R$ be a discrete valuation ring with fraction field $K$ and residue field $k$.
\begin{itemize}
\item
Let $\X$, $\Y$ and $\Z$ be smooth of relative dimension $d$ over $R$. Let $X$, $Y$, $Z$ be the generic and $\Xbar$, $\Ybar$ and $\Zbar$ 
be the special fibers. Let $C\in C_{rat}(\X,\Y)$ and $D\in C_{rat}(\Y,\Z)$. Then one has $$\sigma_{rat}(C.D)=\sigma_{rat}(C).\sigma_{rat}(D)$$
\item
Let $\X,\Y/R$ be as before. Consider a $n$-cycle $P\in Z_n(X)$ and a finite correspondence $C\in C_{fin}(\X,\Y)$. Then $\sigma(P).\sigma_{fin}(C)$ and $P.C$ are well-defined and
$$\sigma(P.C)=\sigma(P).\sigma_{fin}(C)$$
holds in $Z_n(\Ybar)$.
\end{itemize}
\end{lem}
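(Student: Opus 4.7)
The plan is to reinterpret both the composition of correspondences and the specialization $\sigma$ as compositions of refined Gysin maps together with proper push-forward, and then to invoke the standard commutativity properties of refined Gysin maps from \cite[Ch.~6]{fulton}.

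To set the stage, I would first recall from the exposition preceding the lemma that $\sigma$ is itself a refined Gysin map: for any smooth $R$-scheme of relative dimension $n$ with generic fiber $W$ and any $\alpha\in Z_n(W)$ with Zariski closure $\A$, one has $\sigma(\alpha)=i^!([\A])$, where $i^!$ denotes the Gysin map for the codimension-one regular embedding $\Spec k\hookrightarrow\Spec R$. In parallel, the composition $C.D$ is by construction of the shape $p_*(j^!(C\times D))$, where $j$ is the partial diagonal of $\Y$ (of codimension $d$) and $p$ is the canonical projection; the product $P.C$ in the second part has the same form, with $P$ in place of $C$.

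For the first assertion, I would fix representatives of $C$, $D$ and let $\C\subset\X\times_R\Y$, $\D\subset\Y\times_R\Z$ denote the Zariski closures of their supports. Writing the composition at the $R$-level as $[(C.D)_{\mathrm{cl}}]=p_*\bigl(j^!([\C]\times_R[\D])\bigr)$ and applying $i^!$, I would deduce multiplicativity from three standard compatibilities: (a) $i^!$ commutes with proper push-forward (\cite[Theorem~6.2(a)]{fulton}); (b) $i^!$ commutes with the diagonal Gysin map $j^!$, since the two regular embeddings lie in independent directions (the codimension-one one coming from $\Spec R$, the diagonal one from the $\Y$-factor), a Cartesian configuration covered by the general commutativity results of \cite[Ch.~6]{fulton}; (c) $i^!$ is multiplicative on exterior products, so that $i^!([\C]\times_R[\D])=\sigma_{rat}(C)\times_k\sigma_{rat}(D)$. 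Combining (a)--(c) yields the first claim.

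The second assertion follows from the same template with the $n$-cycle $P$ in the role of $C$; one additionally needs that $\sigma_{fin}(C)\in C_{fin}(\Xbar,\Ybar)$, which is automatic because finiteness of the closure of $|C|$ over $\X$ and $\Y$ is preserved by restriction to the special fibers, and so the product $\sigma(P).\sigma_{fin}(C)$ is well-defined. The hard part will be the bookkeeping: verifying that the Zariski-closure operation genuinely commutes with the composition recipe (so that the ``$R$-level'' identity above is valid up to rational equivalence on appropriately proper supports) and that the Cartesian diagrams of regular embeddings are set up so as to satisfy the hypotheses of the commutativity theorems. Once those details are squared away, the multiplicativity is a formal consequence of the calculus of refined Gysin maps.
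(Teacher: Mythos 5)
Your plan is correct and matches the paper's argument: the paper likewise realizes $\sigma$ as $i^!$ for $\Spec k\hookrightarrow\Spec R$ and the composition as $p_*\circ j^!$ on exterior products, then proves multiplicativity by commuting $i^!$ past the diagonal Gysin map (via \cite[Theorem~6.4]{fulton}, which bundles your items (b) and (c) into one application) and past proper push-forward (via \cite[Theorem~6.2(a)]{fulton}, your item (a)). The second bullet is handled there, as in your plan, by the observation that the argument for $P$ in place of a correspondence is exactly analogous.
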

\begin{proof}
Let us choose representatives $\C\in Z_d(\S/R)$ and $\D\in Z_d(\T/R)$ for suitable closed $\S\subset\X\times_R\Y$ and $\T\subset\Y\times_R\Z$, which are 
proper over each of the factors $\X$, $\Y$ and $\Z$. Let $\C\times\D$ be their exterior product in $Z_{2d}(\S\times_R\T/R)$. Consider its Gysin pull-back along 
$\X\times_R\Y\times_R\Z\rightarrow\X\times_R\Y\times_R\Y\times_R\Z$ and call it $\B$. Let $\Bbar\in A_d(\Sbar\times_{\Ybar}\Tbar)$ be the Gysin pull-back of 
$\Cbar\times\Dbar$ along $\Xbar\times_k\Ybar\times_k\Zbar\rightarrow\Xbar\times_k\Ybar\times_k\Ybar\times_k\Zbar$, where $\Sbar:=\S\times_Rk$ and 
$\Tbar:=\T\times_Rk$. Consider the following commutative diagram:
$$\begin{CD}
\Spec k@>i>>\Spec R\\
@AAA@AAA\\
\Sbar\times_k\Tbar@>>>\S\times_R\T@>>>\X\times_R\Y\times_R\Y\times_R\Z\\
@AAA@AAA@AAA\\
\Sbar\times_{\Ybar}\Tbar@>>>\S\times_{\Y}\T@>>>\X\times_R\Y\times_R\Z
\end{CD}$$
By ~\cite[Theorem 6.4.]{fulton} it allows to conclude that $i^!(\B)$ is the same as $\Bbar$. This does not yet 
prove that $\sigma_{rat}(C.D)=\sigma_{rat}(C).\sigma_{rat}(D)$, but the diagram
$$\begin{CD}
\Sbar\times_{\Ybar}\Tbar@>>>\S\times_{\Y}\T\\
@Vp_{\Xbar\Zbar}VV@Vp_{\X\Z}VV\\
\Xbar\times_k\Zbar@>>>\X\times_R\Z\\
@VVV@VVV\\
\Spec k@>i>>\Spec R
\end{CD}$$
shows $\sigma_{rat}(C).\sigma_{rat}(D)=p_{\Xbar\Zbar*}(\Bbar)=p_{\Xbar\Zbar*}(i^!\B)=i^!p_{\X\Z*}(\B)=\sigma_{rat}(C.D)$ 
according to ~\cite[theorem 6.2.(a)]{fulton}.\\ 
The proof of the second part of the lemma is exactly analogous.
\end{proof}

\end{appendix}

\end{document}